\theoremstyle{definition}
\numberwithin{equation}{section}
\newtheorem{theo}{Theorem}[section]
\newtheorem{defi}[theo]{Definition}
\newtheorem{lemm}[theo]{Lemma}
\newtheorem{rema}[theo]{Remark}
\newtheorem{exam}[theo]{Example}
\newtheorem{prop}[theo]{Proposition}
\begin{document}

\title{Remark on Laplacians and Riemannian Submersions with Totally Geodesic Fibers}
\author{Kazumasa Narita}
\thanks{National Institute of Technology, Yonago College, Yonago, Tottori, 683-8502, Japan, narita@yonago-k.ac.jp}
\date{}

\maketitle

\begin{abstract}
Given a Riemannian submersion $(M,g) \to (B,j)$ each of whose fibers is connected and totally geodesic, we consider a certain 1-parameter family of Riemannian metrics $(g_{t})_{t > 0}$ on $M$, which is called the canonical variation. Let $\lambda_{1}(g_{t})$ be the first positive eigenvalue of the Laplace--Beltrami operator $\Delta^{M}_{g_{t}}$ and $\mbox{Vol}(M,g_{t})$ the volume of $(M, g_{t})$. In 1982, B\'{e}rard-Bergery and Bourguignon showed that the scale-invariant quantity $\lambda_{1}(g_{t})\mbox{Vol}(M,g_{t})^{2/\mbox{dim}M}$ goes to $0$ with $t$. In this paper, we show that if each fiber is Einstein and $(M,g)$ satisfies a certain condition about its Ricci curvature, then bounds for $\lambda_{1}(g_{t})$ can be obtained. In particular this implies $\lambda_{1}(g_{t})\mbox{Vol}(M,g_{t})^{2/\mbox{dim}M}$ goes to $\infty$ with $t$. Moreover, using the bounds, we consider stability of critical points of the Yamabe functional. We will see that our results can be applied to many examples. In particular, we consider the twistor fibration of a quaternionic K\"{a}hler manifold of positive scalar curvature.
\end{abstract}

\textbf{Keywords} Laplacian eigenvalue $\cdot$ Riemannian submersion $\cdot$ Einstein manifold

\textbf{Mathematics Subject Classification} 53C20, 53C25

\section{Introduction}
Let $M$ be a compact (connected) manifold of dimension $n$ (without boundary). Given a Riemannian metric $g$ on $M$, the volume $\mbox{Vol}(M,g)$ and the Laplace--Beltrami operator $\Delta_{g}^{M}$ are defined. When there is no room for confusion, we sometimes abbreviate $\Delta_{g}^{M}$ as $\Delta^{M}$. Let $0 = \lambda_{0}(g) < \lambda_{1}(g) < \lambda_{2}(g) < \cdots < \lambda_{l}(g) < \cdots$ be the eigenvalues of $\Delta^{M}$. The quantity $\lambda_{l}(g)\mbox{Vol}(M,g)^{2/n}$ is invariant under scaling of the metric $g$. In 1970, Hersch \cite{Hersch} proved that on a $2$-dimensional sphere $S^{2}$, the quantity $\lambda_{1}(g)\mbox{Area}(S^{2},g)$ is maximized precisely when $g$ is the round metric. In 1973, inspired by this Hersch's work, Berger \cite{Berger} posed a question whether 
\begin{equation*}
\Lambda_{1}(M) := \sup_{g} \lambda_{1}(g)\mbox{Vol}(M,g)^{2/n}
\end{equation*}
is finite for a compact manifold $M$ of dimension $n$. This question is equivalent to asking whether the supremum of the functional $\lambda_{1}$ is finite over all the Riemannian metrics with fixed volume. For a closed surface $M$, $\Lambda_{1}(M)$ is bounded by a constant depending on the genus of $M$ \cite{YY, Karpukhin}. In contrast, in 1979, Urakawa \cite{Urakawa} considered a 1-parameter family of Riemannian metrics $(h_{t})_{t > 0}$ on  SU(2) $\cong S^{3}$ with fixed volume and showed that the scale-invariant quantity $\lambda_{1}(h_{t})\mbox{Vol}(S^{3}, h_{t})^{2/3}$ goes to $\infty$ as $t$ goes to $\infty$, and to $0$ as $t$ goes to $0$. Here $(h_{t})_{t>0}$ is obtained by a certain deformation associated with the Hopf fibration $S^{1} \to S^{3} \to \mathbf{C}P^{1}$ and $h_{1}$ is the round metric. Soon after this Urakawa's work, Tanno \cite{Tanno} considered the Hopf fibration $S^{1} \to S^{2n+1} \to \mathbf{C}P^{n}$ to generalize the Urakawa's result. Paying attention to the Sasakian structure on $S^{2n+1}$, he showed that $\lambda_{1}(h_{t})\mbox{Vol}(S^{2n+1}, h_{t})^{2/(2n+1)}$ goes to $\infty$ with $t$, and to $0$ with $t$.  Soon after this Tanno's work, Muto \cite{Muto} constructed $(h_{t})_{t > 0}$ on $S^{2n}$ such that $\lambda_{1}(h_{t})\mbox{Vol}(S^{2n+1}, h_{t})^{2/(2n+1)}$ goes to $\infty$ with $t$, and to $0$ with $t$. In 1983, Bleecker \cite{Bleecker} generalized Tanno's work and proved that if $(M,g)$ is a compact regular Sasakian manifold, then there exists a 1-parameter family of Riemannian metrics $(h_{t})_{t > 0}$ on $M$ such that $\lambda_{1}(h_{t})\mbox{Vol}(M,h_{t})^{2/n}$ goes to $\infty$ with $t$, and to $0$ with $t$. In 1994, based on the aforementioned work due to Urakawa \cite{Urakawa}, Tanno \cite{Tanno}, Muto \cite{Muto} and Bleecker \cite{Bleecker}, Colbois and Dodziuk \cite{CD} showed that on any compact manifold $M$ of dimension $n \geq 3$, there exists a 1-parameter family of Riemannian metrics $(k_{t})_{t > 0}$ such that $\lambda_{1}(k_{t})\mbox{Vol}(M,k_{t})^{2/n}$ goes to $\infty$ with $t$. In contrast to the metrics considered by Urakawa, Tanno, Muto and Bleecker, those constructed by Colbois and Dodziuk cannot be written explicitly.

This paper is largely inspired by B\'{e}rard-Bergery and Bourguignon's work \cite{BBB} in 1982. They considered a Riemannian submersion $\pi: (M, g) \to (B,j)$ each of whose fibers is connected and totally geodesic. They constructed 1-parameter family of Riemannian metrics $(g_{t})_{t > 0}$ on $M$, which is called the \textit{canonical variation} (see the original paper \cite{BBB} or Section 2 of this paper for the precise definition). As for the canonical variation $(g_{t})_{t > 0}$, set
\begin{equation*}
\Lambda_{1}(M, t) := \lambda_{1}(g_{t})\mbox{Vol}(M,g_{t})^{2/n}.
\end{equation*}
They showed that $\Lambda_{1}(M, t)$ goes to $0$ as $t$ goes to $0$. This can be seen as partial generalization of the aforementioned work due to Urakawa \cite{Urakawa}, Tanno \cite{Tanno} and Bleecker \cite{Bleecker}. From the perspective of the above history, it is natural to ask when $\Lambda_{1}(M, t)$ goes to $\infty$ with $t$. B\'{e}rard-Bergery and Bourguignon \cite{BBB} pointed out that if the Riemannian submersion $\pi$ induces a sub-Riemannian structure on $M$, then $\Lambda_{1}(M, t)$ goes to $\infty$ with $t$. However, only such a condition gives no estimates for $\lambda_{1}(g_{t})$. In 2016, Baudoin and Kim \cite{BK} proved that if the sub-Riemannian structure is of H-type in addition and the horizontal Ricci curvature is positive, then a sharp lower bound for the first eigenvalue of the horizontal Laplacian can be found. From this bound, one can immediately obtain a lower bound for $\lambda_{1}(g_{t})$ and conclude that $\Lambda_{1}(M, t)$ goes to $\infty$ with $t$. In the main theorem of this paper, under the assumption only on Ricci curvatures, we give bounds for $\lambda_{1}(g_{t})$, namely: 
\begin{theo}
\label{MainThm}
Let $(M, g)$ and $(B, j)$ be connected compact Riemannian manifolds of dimension $n$ and $p$ $(n>p)$ respectively. Let $\pi: (M, g) \to (B,j)$ be a Riemannian submersion each of whose fibers is connected and totally geodesic. The fibers equipped with induced metrics are isometric to each other. Assume that there exists $\widetilde{c} > 0$ such that 
\begin{equation*}
\mbox{Ric}^{M} \geq \widetilde{c}g,
\end{equation*}
where $\mbox{Ric}^{M}$ is the Ricci tensor of $(M,g)$. In case $p \leq n-2$, assume also that there exists $0 \leq c < \widetilde{c}$ such that 
\begin{equation*}
\mbox{Ric}^{F_{y}} = c(\iota^{*}g)
\end{equation*}
for any $y \in B$, where $F_{y}$ is a fiber of $y$ and $\iota:F_{y} \hookrightarrow M$ is the inclusion map. Then for any $t \geq 1$, one has 
\begin{equation}
\label{main-estimate}
\frac{\widetilde{c}-c}{n+1} + t^{-2}\left( \frac{n^{2}+1}{n^{2}-1}\widetilde{c}+\frac{c}{n+1} \right) \leq \lambda_{1}(g_{t}) \leq \beta_{1},
\end{equation}
where $\beta_{1}$ is the smallest positive eigenvalue of $\Delta^{B}_{j}$. As for the left inequality, the equality holds if and only if $t =1$ and $(M,g)$ is isometric to the odd dimensional round sphere $S^{2m+1}(\sqrt{2m/\widetilde{c}})$. As for the right inequality, the  equality holds if and only if the space of pullback functions of $\beta_{1}$-eigenfunctions from $(B,j)$ to $(M, g_{t})$ is a nontrivial subspace of the $\lambda_{1}(g_{t})$-eigenfunction space. In particular, one has
\begin{equation*}
\Lambda_{1}(M, t) = O(t^{2(n-p)/n}).
\end{equation*}
\end{theo}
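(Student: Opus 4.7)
My plan is to pull eigenfunctions back from the base. Because each fiber is totally geodesic, hence minimal, for any $\phi \in C^{\infty}(B)$ one has $\Delta^{M}_{g}(\pi^{*}\phi) = \pi^{*}(\Delta^{B}_{j}\phi)$; since the canonical variation preserves both the horizontal distribution and the horizontal part of the metric, the same identity holds for $\Delta^{M}_{g_{t}}$. Applied to a $\beta_{1}$-eigenfunction on $B$, this produces a nonconstant eigenfunction of $\Delta^{M}_{g_{t}}$ of eigenvalue $\beta_{1}$, so $\lambda_{1}(g_{t}) \leq \beta_{1}$. The equality characterization then amounts to asking when such a pullback lies in the $\lambda_{1}(g_{t})$-eigenspace; and the ``in particular'' claim $\Lambda_{1}(M,t)=O(t^{2(n-p)/n})$ is immediate from this upper bound together with $\mathrm{Vol}(M,g_{t}) = t^{n-p}\mathrm{Vol}(M,g)$.

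\textbf{Reduction to two cases via the fiber splitting.} Because the fibers are isometric and totally geodesic, fiber-averaging commutes with $\Delta^{M}_{g_{t}}$, so $L^{2}(M,g_{t})$ decomposes orthogonally as $V_{0}\oplus V_{0}^{\perp}$ with $V_{0} = \pi^{*}L^{2}(B,j)$; both summands are $\Delta^{M}_{g_{t}}$-invariant, so I may assume a first eigenfunction $f$ lies in one of them. If $f\in V_{0}$, then $\lambda_{1}(g_{t})\in\{\beta_{k}\}_{k\geq 1}$, and combined with the upper bound, $\lambda_{1}(g_{t})=\beta_{1}$; the required lower bound on $\beta_{1}$ then follows from the Lichnerowicz inequality on $(B,j)$, since the O'Neill formula $\mathrm{Ric}^{M}(X,Y)=\mathrm{Ric}^{B}(X^{*},Y^{*})\circ\pi-2\langle A_{X},A_{Y}\rangle$ combined with $\mathrm{Ric}^{M}\geq\widetilde{c}g$ gives $\mathrm{Ric}^{B}\geq\widetilde{c}j$, whence $\beta_{1}\geq p\widetilde{c}/(p-1)$; a short algebraic check shows this dominates the announced right-hand side for every $t\geq 1$.

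\textbf{Bochner argument on $V_{0}^{\perp}$.} For $f\in V_{0}^{\perp}$ the plan is to run the integrated Bochner identity on $(M,g_{t})$,
\begin{equation*}
\int_{M}|\nabla^{2}f|^{2}_{g_{t}}\,dV_{g_{t}} = \lambda_{1}(g_{t})\int_{M}|\nabla f|^{2}_{g_{t}}\,dV_{g_{t}} - \int_{M}\mathrm{Ric}^{g_{t}}(\nabla f,\nabla f)\,dV_{g_{t}},
\end{equation*}
combined with the pointwise Cauchy--Schwarz bound $|\nabla^{2}f|^{2}_{g_{t}} \geq (\Delta^{M}_{g_{t}}f)^{2}/n$ and, since $f$ has zero fiber-average, the fiberwise Poincar\'e inequality $\int_{M}|\nabla^{V}f|^{2}_{g_{t}}\,dV_{g_{t}} \geq t^{-2}\mu_{1}\int_{M}f^{2}\,dV_{g_{t}}$. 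I compute $\mathrm{Ric}^{g_{t}}$ from the standard O'Neill formulas for canonical variations with totally geodesic fibers (so the $T$-tensor vanishes and only $A$-tensor terms remain), then substitute $\mathrm{Ric}^{M}\geq\widetilde{c}g$ together with the sharp identity $\mathrm{Ric}^{F}=c\iota^{*}g$ to get pointwise lower bounds on $\mathrm{Ric}^{g_{t}}$ whose horizontal and vertical parts carry genuinely different $t$-dependence. An optimally weighted combination of the horizontal bound, the vertical bound, and the fiber Poincar\'e inequality is intended to collapse the Bochner identity into the announced coefficients $(\widetilde{c}-c)/(n+1)$ and $(n^{2}+1)/(n^{2}-1)$.

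\textbf{Equality case and main obstacle.} Equality in the lower bound at $t=1$ forces saturation of Cauchy--Schwarz (so $\nabla^{2}f$ is a scalar multiple of the metric) and of $\mathrm{Ric}^{M}\geq\widetilde{c}g$; Obata's theorem then identifies $(M,g)$ with a round sphere, and the presence of a totally geodesic fibration with Einstein fibers of constant $c<\widetilde{c}$ forces the fiber dimension to be $1$ (so $c=0$), singling out $S^{2m+1}$ with the Hopf fibration and the stated radius. The hardest step I expect is the algebraic matching in the Bochner computation: the coefficients $1/(n+1)$ and $(n^{2}+1)/(n^{2}-1)$ come from a very specific weighting, and the horizontal-vertical cross terms in $\nabla^{2}f$ and $\mathrm{Ric}^{g_{t}}$ produced by the $A$-tensor must be absorbed by Cauchy--Schwarz without loss of sharpness, which is exactly where the Einstein fiber condition and the strict inequality $c<\widetilde{c}$ will be essential.
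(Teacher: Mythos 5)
Your upper bound and the ``in particular'' statement are correct and match the paper. Your case split via $V_{0}=\pi^{*}L^{2}(B)$ and the treatment of the $V_{0}$ case (Lichnerowicz on $(B,j)$ after deducing $\mathrm{Ric}^{B}\geq\widetilde{c}j$ from O'Neill) are plausible, though the paper needs no such split. The central difficulty is the $V_{0}^{\perp}$ case, and there your approach diverges from the paper's in a way that leaves a genuine gap.

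You propose to apply the integrated Bochner formula directly on $(M,g_{t})$, which requires computing $\mathrm{Ric}^{g_{t}}$ from the O'Neill formulas. But $\mathrm{Ric}^{g_{t}}$ contains $A$-tensor terms whose size is \emph{not} controlled by the hypotheses $\mathrm{Ric}^{M}\geq\widetilde{c}g$ and $\mathrm{Ric}^{F}=c\,\iota^{*}g$ alone (and the horizontal Ricci of $g_{t}$ actually goes to $-\infty$ as $t\to\infty$). Similarly, your fiberwise Poincar\'{e} inequality needs the first eigenvalue $\mu_{1}$ of the fiber, which is likewise not determined by $c$ when $c=0$ (e.g.\ a circle fiber). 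You defer the resolution of both issues to an ``optimally weighted combination'' that you describe as the hardest step but never carry out; that is precisely where the proof has to happen. The paper avoids all of this by never touching $\mathrm{Ric}^{g_{t}}$: it applies the Bochner formula twice, once on $(M,g)$ and once fiberwise on each $(F_{y},\iota^{*}g)$, combines them via the Fubini-type formula to constrain the eigenvalue $a$ of $\Delta_{h}$ on each $\lambda_{k}(g)$-eigenspace (Lemma \ref{key-lemma}, then Lemma \ref{lemma}, giving $a>(\widetilde{c}-c)/(n+1)$), and then uses the purely algebraic identity $\Delta^{M}_{g_{t}}=t^{-2}\Delta_{v}+\Delta_{h}$ together with commutativity of $\Delta^{M}$ and $\Delta_{h}$ to write $\lambda_{1}(g_{t})=\inf_{k}[t^{-2}\lambda_{k}(g)+(1-t^{-2})a_{k,1}]$ and conclude. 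The crucial structural input you are missing is that the fiber Bochner formula is an \emph{equality} (because $\mathrm{Ric}^{F}=c\,\iota^{*}g$ is an equality), which is what allows the unfavorable fiber contribution in $a=\int|\nabla^{M}f|^{2}-\int_{B}\int_{F}|\nabla^{F}f|^{2}$ to be traded for a controlled Hessian term.

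Separately, your characterization of the equality case is incorrect. You assert that Einstein fibers with $c<\widetilde{c}$ force the fiber dimension to be $1$. That is false: the paper's equality case holds for every Riemannian submersion from an odd-dimensional round sphere with connected totally geodesic fibers, including $S^{3}\to S^{7}\to S^{4}$ and $S^{7}\to S^{15}\to S^{8}(1/2)$, where the fibers have dimension $3$ and $7$ respectively and $c>0$. The paper's argument instead runs: Obata forces $(M,g)$ to be a round sphere, and then the Escobales--Ranjan classification forces $n$ odd; the converse direction is checked directly for all three Hopf-type fibrations.
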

Even after the aforementioned question of Berger was settled by Colbois and Dodziuk \cite{CD}, the study of $\lambda_{1}(g_{t})$ is of interest since it is related to stability of critical points of the Yamabe functional. Yamabe functional $Y_{C}$ is the normalized total scalar curvature functional restricted to the conformal class $C$. A metric $g \in C$ is a critical point of $Y_{C}$ if and only if $g$ has constant scalar curvature. Given a Riemannian submersion $(M,g) \to (B,j)$ with totally geodesic fibers such that $g$ is Einstein, $g_{t}$ is a critical point of the Yamabe functional $Y_{[g_{t}]}$ for any $t >0$. However, whether it is a stable critical point depends on its scalar curvature and $\lambda_{1}(g_{t})$.  In 2022, Bettiol, Lauret and Piccione \cite{BLP} completed computation of $\lambda_{1}(g_{t})$ for all the Riemannian submersions from compact rank one symmetric spaces and determined the range of $t$ where $g_{t}$ is stable for each of these submersions. In this paper, we consider a more general setting and as a corollary of Theorem \ref{MainThm}, we deduce a sufficient condition for $g_{t}$ to be stable:
\begin{theo}[Theorem \ref{stability}]
\label{Intro}
Let $(M, g)$ and $(B, j)$ be connected compact Riemannian manifolds of dimension $n$ and $p$ $(n>p)$ respectively. Assume that $(M,g)$ is not isometric to the round sphere. Let $\pi: (M, g) \to (B,j)$ be a Riemannian submersion each of whose fibers is connected and totally geodesic. Assume that the \textit{A-tensor} $A$ does not vanish. Assume also that there exists $\widetilde{c} > 0$ such that 
\begin{equation*}
\mbox{Ric}^{M} = \widetilde{c}g,
\end{equation*}
where $\mbox{Ric}^{M}$ is the Ricci tensor of $(M,g)$. In case $p \leq n-2$, assume also that there exists $0 \leq c < \widetilde{c}$ such that 
\begin{equation*}
\mbox{Ric}^{F_{y}} = c(\iota^{*}g)
\end{equation*}
for any $y \in B$. Set
\begin{equation*}
\Gamma := \frac{n^{2}+1}{n+1}\left( \widetilde{c}-c \right) +pc.
\end{equation*}
Then for any $t \geq \max\{1, \: \sqrt{\Gamma}/|A|\}$, $g_{t}$ is stable.
\end{theo}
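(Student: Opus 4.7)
The plan is to combine the lower bound of Theorem \ref{MainThm} with the standard second-variation characterisation of Yamabe stability. Recall that a constant scalar curvature metric $g_{t}$ is a stable critical point of $Y_{[g_{t}]}$ precisely when $\lambda_{1}(g_{t}) \geq \mbox{scal}(g_{t})/(n-1)$. Under the theorem's hypotheses $(M,g)$ is Einstein of constant $\widetilde{c}$ and every fibre is Einstein of constant $c$; since the fibres are totally geodesic, the O'Neill--Besse formula for the canonical variation yields
\begin{equation*}
\mbox{scal}(g_{t}) = \pi^{*}\mbox{scal}^{B} + \frac{\mbox{scal}^{F}}{t^{2}} - t^{2}|A|^{2}.
\end{equation*}
In particular $\mbox{scal}(g_{t})$ is constant on $M$, and substituting $\mbox{scal}^{F}=(n-p)c$ and evaluating at $t=1$ (where $\mbox{scal}(g)=n\widetilde{c}$) to solve for the base term yields
\begin{equation*}
\mbox{scal}(g_{t}) = n\widetilde{c} + (n-p)c\!\left(\frac{1}{t^{2}}-1\right) + (1-t^{2})|A|^{2}.
\end{equation*}

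The heart of the argument is an algebraic comparison of $\mbox{scal}(g_{t})/(n-1)$ with the lower bound in \eqref{main-estimate}. Writing that bound as $\alpha + \beta/t^{2}$ with $\alpha:=(\widetilde{c}-c)/(n+1)$ and $\beta:=(n^{2}+1)\widetilde{c}/(n^{2}-1)+c/(n+1)$, one checks that $(n-1)(\alpha+\beta)=n\widetilde{c}$, so $(n-1)\alpha-n\widetilde{c}=-(n-1)\beta$. Substituting and factoring, the stability gap becomes
\begin{equation*}
(n-1)\lambda_{1}(g_{t}) - \mbox{scal}(g_{t}) \geq \frac{t^{2}-1}{t^{2}}\bigl[\, t^{2}|A|^{2} - \bigl((n-1)\beta - (n-p)c\bigr)\,\bigr].
\end{equation*}
A short calculation (the identity I would verify carefully) shows $(n-1)\beta-(n-p)c=\Gamma$, whence the right-hand side equals $(t^{2}-1)(t^{2}|A|^{2}-\Gamma)/t^{2}$. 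The hypotheses $t\geq 1$ and $t\geq \sqrt{\Gamma}/|A|$ make both factors non-negative, giving the Yamabe-stability inequality $\lambda_{1}(g_{t}) \geq \mbox{scal}(g_{t})/(n-1)$.

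Excluding the round sphere rules out the equality case of Theorem \ref{MainThm} at $t=1$, promoting the stability inequality there to a strict one and hence guaranteeing that $g_{1}$ is a non-degenerate local minimum (as required by the standard notion of stability). The main obstacle is bookkeeping: applying the O'Neill--Besse scalar-curvature formula with the canonical-variation convention consistent with the scaling $\Lambda_{1}(M,t) = O(t^{2(n-p)/n})$ from Theorem \ref{MainThm}, and then verifying the identity $(n-1)\beta-(n-p)c=\Gamma$. Once those are in hand, the threshold $t \geq \max\{1,\sqrt{\Gamma}/|A|\}$ emerges naturally as the sharp condition making the bracket in the stability gap non-negative.
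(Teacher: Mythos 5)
Your proposal is correct and follows essentially the same route as the paper's proof: write $S(g_{t})$ via the O'Neill formula normalized at $t=1$, subtract it from $(n-1)$ times the lower bound of Theorem~\ref{MainThm}, and factor the resulting expression as $|A|^{2}t^{-2}\bigl(t^{2}-\Gamma/|A|^{2}\bigr)\bigl(t^{2}-1\bigr)$. Your intermediate bookkeeping via $\alpha,\beta$ and the identity $(n-1)(\alpha+\beta)=n\widetilde{c}$ (and $(n-1)\beta-(n-p)c=\Gamma$, which does check out) is a mild repackaging of the same computation, and your use of the round-sphere exclusion to upgrade the nonstrict bound to a strict one at the threshold matches the paper's reasoning.
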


In Section 5, we will see that Theorem \ref{MainThm} and Theorem \ref{Intro} can be applied to many examples, in particular the twistor fibration of a quaternionic K\"{a}hler manifold of positive scalar curvature.  

\textbf{Organization} In Section 2, we will review the work due to B\'{e}rard-Bergery and Bourguignon \cite{BBB}. In Section 3, we will prove Theorem \ref{MainThm}. The essential part of its proof is Lemma \ref{key-lemma}. This lemma is of interest itself and proved by modification of the proof of the Lichnerowicz--Obata theorem. In Section 4, after a review of basic facts of the Yamabe problem, we will prove Theorem \ref{Intro} (Theorem \ref{stability}). In Section 5, we will see many examples. In particular, we will consider the canonical variation of the K\"{a}hler--Einstein metric on the twistor space of a quaternionic K\"{a}hler manifold of positive scalar curvature.

\section{Review of the work due to B\'{e}rard-Bergery and Bourguignon}
First we briefly review B\'{e}rard-Bergery and Bourguignon's work \cite{BBB}. Let $(M, g)$ and $(B, j)$ be connected compact Riemannian manifolds of dimension $n$ and $p$ respectively. Let $\pi: (M, g) \to (B,j)$ be a Riemannian submersion. For $m \in M$, the fiber at $\pi(m) \in B$ is denoted by $F_{\pi(m)}$. Each fiber $F_{\pi(m)}$ is a $(n-p)$-dimensional submanifold of $M$. Let $\iota : F_{\pi_{m}} \hookrightarrow M$ be the inclusion map. Then the following Fubini-type formula holds: for any integrable function $f$ on $(M, g)$, one has
\begin{equation}
\label{Fubini}
\int_{M} f(x) \: d\mu_{g}(x) = \int_{B} \left[ \int_{F_{y}} f(z)d\mu_{\iota^{*}g}(z) \right] d\mu_{j}(y),
\end{equation}
which can be seen easily by letting $y= \pi(x)$ and integration by parts. 
In \cite{BBB}, B\'{e}rard-Bergery and Bourguignon defined the \textit{vertical Laplacian} $\Delta_{v}$ and the \textit{horizontal Laplacian} $\Delta_{h}$ as follows: for a smooth function $f$ on $M$, $\Delta_{v} f$ and $\Delta_{h}f$ are given by
\begin{equation*}
(\Delta_{v} f) (m) := \Delta^{F_{\pi(m)}} (f \restriction_{F_{\pi(m)}}) (m) 
\end{equation*}
and
\begin{equation*}
\Delta_{h}f := \Delta^{M} f- \Delta_{v}f, 
\end{equation*}
where $\Delta^{F_{\pi(m)}}$ is the Laplacian on the Riemannian manifold $(F_{\pi(m)}, \iota^{*}g)$ and $\Delta^{M}$ is that on $(M,g)$. These operators are (formally) self-adjoint nonnegative operator on $L^{2}(M,g)$. As B\'{e}rard-Bergery and Bourguignon remarked, these operators are not elliptic and so the names of these operators might be misleading. The spectrum of $\Delta_{v}$ is discrete, but that of $\Delta_{h}$ is not necessarily so. 

In this paper, we always assume that $\pi: (M, g) \to (B,j)$ is a Riemannian submersion with totally geodesic fibers. Hermann \cite{Hermann} showed the following fundamental result:
\begin{lemm}[\cite{Hermann}]
\label{lemm-Hermann}
Let $(M, g)$ and $(B, j)$ be connected complete Riemannian manifolds. Let $\pi: (M, g) \to (B,j)$ be a Riemannian submersion. Assume that each of the fibers equipped with the induced metric is totally geodesic in $(M, g)$. Then the fibers are isometric to each other.
\end{lemm}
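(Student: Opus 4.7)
The plan is to construct an explicit isometry between any two fibers by horizontally lifting a geodesic in $B$. Pick $y_1, y_2 \in B$; by completeness of $(B, j)$ and Hopf--Rinow there is a geodesic $\gamma: [0, 1] \to B$ from $y_1$ to $y_2$. Completeness of $(M, g)$ ensures that for each $x \in F_{y_1}$ the horizontal lift $\tilde\gamma_x$ with $\tilde\gamma_x(0) = x$ is defined on the whole interval $[0, 1]$, so I set $\phi(x) := \tilde\gamma_x(1) \in F_{y_2}$. Smooth dependence on initial conditions and reversibility of $\gamma$ make $\phi$ a smooth diffeomorphism $F_{y_1} \to F_{y_2}$.

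To show $\phi$ is an isometry, fix $x \in F_{y_1}$ and $v \in T_x F_{y_1}$, take a smooth curve $\sigma$ in $F_{y_1}$ with $\sigma(0) = x$, $\sigma'(0) = v$, and form the two-parameter map $c(s, t) = \tilde\gamma_{\sigma(s)}(t)$. Because every $t$-curve is a horizontal geodesic projecting to the same $\gamma$, the variation field $V = \partial_s c$ satisfies $d\pi(V) \equiv 0$, so $V$ is vertical on the entire surface, while $T = \partial_t c$ is horizontal. From $[T, V] = 0$ we have $\nabla_T V = \nabla_V T$, and from $\langle T, V\rangle \equiv 0$ together with differentiation in the $V$-direction we obtain $\langle \nabla_V T, V\rangle = -\langle T, \nabla_V V\rangle$. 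At any point $c(s, t) \in F_{\gamma(t)}$, the horizontal component of $\nabla_V V$ is exactly the second fundamental form of $F_{\gamma(t)}$ evaluated on $(V, V)$, which vanishes because the fibers are totally geodesic. Hence $\langle T, \nabla_V V\rangle = 0$, and $\tfrac{d}{dt}|V|^2 = 2\langle \nabla_T V, V\rangle = 0$. Evaluating at $s = 0$ yields $|d\phi(v)| = |V(0, 1)| = |V(0, 0)| = |v|$, so $\phi$ is an isometry.

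For arbitrary $y_1, y_2 \in B$ one either uses a single geodesic supplied by Hopf--Rinow or, if one prefers not to invoke the Hopf--Rinow existence at a cut locus, chains finitely many geodesic segments (by connectedness of $B$) and composes the corresponding fiber isometries. The only conceptually delicate point is justifying that $V$ is genuinely vertical and smooth on the whole variation surface; this follows from smooth dependence of horizontal lifts on initial conditions and the fact that all the lifts $\tilde\gamma_{\sigma(s)}$ project to the same curve $\gamma$. The totally geodesic hypothesis is then used in a single clean step to kill the horizontal component of $\nabla_V V$, which is the crux of the argument.
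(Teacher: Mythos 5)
Your argument is correct; note that the paper itself offers no proof of this lemma and merely cites Hermann, so the relevant comparison is to the standard (Hermann--O'Neill) argument, which is exactly what you have reproduced. The key ingredients are all present and used correctly: the horizontal lift of a geodesic in $B$ is a geodesic in $M$ (O'Neill), so completeness of $M$ extends the lift over $[0,1]$; the variation field $V = \partial_s c$ is vertical because $\pi \circ c(s,\cdot)$ is independent of $s$; and the only place the totally geodesic hypothesis enters is to kill the horizontal component of $\nabla_V V$, which is the second fundamental form of the fiber, giving $\partial_t |V|^2 = 0$. One small remark: since $B$ is complete, Hopf--Rinow already yields a minimizing geodesic between \emph{any} two points (even cut-locus pairs), so the fallback of chaining several geodesic segments is unnecessary, though harmless.
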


Using results due to Hermann \cite{Hermann}, B\'{e}rard-Bergery and Bourguignon showed the following:

\begin{lemm}[\cite{BBB}]
\label{lemm-BBB}
Let $(M, g)$ and $(B, j)$ be connected compact Riemannian manifolds of dimension $n$ and $p$ respectively. Let $\pi: (M, g) \to (B,j)$ be a Riemannian submersion. Assume that each of the fibers is totally geodesic in $(M, g)$. Let $\{\tilde{e}_{i} \}_{i=1}^{p}$ be a local orthonormal frame on $(B, j)$. For any $1 \leq i \leq p$, let $e_{i}$ be the horizontal lift of $\tilde{e}_{i}$. Let $\{e_{i}\}_{i=p+1}^{n}$ be local vertical vector fields such that $\{e_{i}\}_{i=1}^{n}$ is a local orthonormal frame on $(M,g)$. Then one has 
\begin{equation}
\label{local-v}
\Delta_{v} f= -\sum_{i=p+1}^{n} [e_{i}(e_{i}f)-(\nabla^{M}_{e_{i}}e_{i})f] 
\end{equation}
and
\begin{equation}
\label{local-h}
\Delta_{h} f= -\sum_{i=1}^{p} [e_{i}(e_{i}f)-(\nabla^{M}_{e_{i}}e_{i})f],
\end{equation}
where $\nabla^{M}$ is the Levi--Civita connection of $(M,g)$. Moreover, the operators $\Delta^{M}$, $\Delta_{v}$ and $\Delta_{h}$ commute with each other.
\end{lemm}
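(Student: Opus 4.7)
My plan is to handle (2.3) and (2.4) first, as these are calculational consequences of the hypotheses, and then to address the commutativity, which is the substantive part.

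For (2.3), the key input is the Gauss formula: since each fiber is totally geodesic in $(M,g)$, its second fundamental form vanishes, and hence for any two vertical vector fields $V, W$ one has $\nabla^{M}_{V}W = \nabla^{F_{\pi(m)}}_{V|_{F_{\pi(m)}}}W|_{F_{\pi(m)}}$ along each fiber. The vertical frame $\{e_{i}\}_{i=p+1}^{n}$ restricts to a local orthonormal frame on the fiber through each $m$, and for any smooth $f$ on $M$, $(e_{i}f)(m)$ coincides with the derivative of $f\restriction_{F_{\pi(m)}}$ in the direction $e_{i}|_{F_{\pi(m)}}(m)$. Substituting these two observations into the standard local expression for the Laplace--Beltrami operator of $(F_{\pi(m)},\iota^{*}g)$ evaluated at $m$ yields (2.3). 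Formula (2.4) is then immediate: writing $\Delta^{M}f = -\sum_{i=1}^{n}[e_{i}(e_{i}f)-(\nabla^{M}_{e_{i}}e_{i})f]$ in the orthonormal frame $\{e_{i}\}_{i=1}^{n}$ and subtracting (2.3) gives (2.4).

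For the commutativity, since $\Delta^{M} = \Delta_{v}+\Delta_{h}$ it suffices to prove $[\Delta_{v},\Delta_{h}] = 0$. The approach is a direct local computation based on (2.3)--(2.4), organized by two structural identities forced by the totally geodesic hypothesis: (i) $\nabla^{M}_{V}W$ is vertical whenever $V, W$ are vertical, and dually (by metric compatibility) $\nabla^{M}_{V}X$ is horizontal whenever $V$ is vertical and $X$ horizontal; and (ii) if $e_{j}$ is the horizontal lift of a vector field on $B$ and $e_{i}$ is vertical, then $[e_{j},e_{i}]$ is vertical, because $e_{j}$ is $\pi$-related to a vector field on $B$ while $e_{i}$ is $\pi$-related to $0$. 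At a fixed point $m_{0}$, I would choose the vertical part of the frame to be parallel at $m_{0}$ along the fiber (possible since the fibers are totally geodesic) and the horizontal part to consist of horizontal lifts of a frame that is parallel at $\pi(m_{0})$ on $B$; this kills the Christoffel-type terms in (2.3)--(2.4) at $m_{0}$ and reduces $[\Delta_{v},\Delta_{h}]f(m_{0})$ to an explicit expression involving only iterated brackets of frame vectors applied to $f$, which (i) and (ii) then rearrange into cancellation.

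The principal obstacle is precisely this final cancellation. The O'Neill $A$-tensor, which measures the non-integrability of the horizontal distribution, enters the raw expansion of both $\Delta_{v}\Delta_{h}f$ and $\Delta_{h}\Delta_{v}f$, and verifying that its contributions cancel in the commutator relies on (i) to keep vertical differentiations inside the vertical distribution and on (ii) to reorganize horizontal-vertical brackets. The vanishing of the O'Neill $T$-tensor (equivalent to the fibers being totally geodesic) is used essentially at this step, and without it the cancellation would genuinely fail.
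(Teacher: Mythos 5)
The paper does not prove this lemma itself; it simply cites B\'erard-Bergery and Bourguignon \cite{BBB} for it, so there is no proof in the paper to compare against. Your derivations of the local formulas \eqref{local-v} and \eqref{local-h} are correct and standard: the totally geodesic hypothesis gives $\nabla^{M}_{V}W=\nabla^{F}_{V}W$ for vertical $V,W$, which makes the ambient frame formula restrict to the intrinsic Laplacian of the fiber, and \eqref{local-h} then follows by subtraction from the usual frame formula for $\Delta^{M}$.

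The commutativity part, however, is only sketched and as written has a gap. Your plan is to choose a frame that kills the Christoffel-type terms at one point $m_{0}$ and then argue that what remains of $[\Delta_{v},\Delta_{h}]f(m_{0})$ is a bracket expression that cancels via (i) and (ii). But in the composition $\Delta_{h}\Delta_{v}$ one differentiates the correction term $(\nabla^{M}_{e_{i}}e_{i})f$ twice, so first and second horizontal derivatives of the vertical Christoffel symbols survive at $m_{0}$ even though the symbols themselves vanish there; the resulting expression is not just iterated brackets of frame fields applied to $f$, and the claimed cancellation is asserted rather than demonstrated. The clean way to close this is the argument used in \cite{BBB}: show that for any basic vector field $X$ one has $[X,\Delta_{v}]=0$ as first-order operators, because the local flow of $X$ maps fibers to fibers by isometries --- precisely the computation $(\mathcal{L}_{X}g)(V,W)=-2g(X,\nabla^{M}_{V}W)=0$, where the vanishing uses that $\nabla^{M}_{V}W$ is vertical, i.e.\ the vanishing of the O'Neill $T$-tensor. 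Once $[X,\Delta_{v}]=0$ for basic $X$, commutativity of $\Delta_{h}$ with $\Delta_{v}$ follows immediately from \eqref{local-h} because each $e_{i}$ and each $\nabla^{M}_{e_{i}}e_{i}$ is basic, and then $[\Delta^{M},\Delta_{v}]=[\Delta_{h}+\Delta_{v},\Delta_{v}]=0$. Your observations (i) and (ii) are exactly the raw material for this, but you should recast them as the statement that basic flows are fiber isometries rather than attempting the brute-force fourth-order computation.
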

Urakawa \cite{Urakawa} considered a 1-parameter family of Riemannian metrics $(h_{t})_{t > 0}$ on  SU(2) $\cong S^{3}$ with fixed volume, which is associated with the Hopf fibration $S^{1} \to S^{3} \to \mathbf{C}P^{1}$. He showed that the scale-invariant quantity $\lambda_{1}(h_{t})\mbox{Vol}(S^{3}, h_{t})^{2/3}$ goes to $\infty$ as $t$ goes to $\infty$, and to $0$ as $t$ goes to $0$. This result was generalized to a higher dimensional sphere by Tanno \cite{Tanno} and Muto \cite{Muto}. B\'{e}rard-Bergery and Bourguignon considered  similar but different metric deformation that is not volume-preserving.

\begin{defi} Let $(M, g)$ and $(B, j)$ be Riemannian manifolds. Let $\pi: (M, g) \to (B,j)$ be a Riemannian submersion. For $m \in M$, the vertical (\textit{resp.} horizontal) space is denoted by $V_{m}M$ (\textit{resp.} $H_{m}M$). For each $t >0$, let $g_{t}$ be the unique Riemannian metric on $M$ satisfying the following three conditions:
\begin{enumerate}
\item $g_{t} \restriction_{V_{m}M \times H_{m}M} = 0$.
\item $g_{t} \restriction_{H_{m}M \times H_{m}M} = g \restriction_{H_{m}M \times H_{m}M}$.
\item $g_{t} \restriction_{V_{m}M \times V_{m}M} = t^{2}g \restriction_{V_{m}M \times V_{m}M}$.
\end{enumerate}
The 1-parameter family of Riemannian metrics $(g_{t})_{t>0}$ is called the \textit{canonical variation} of $g$ associated with $\pi: (M, g) \to (B,j)$. 
\end{defi}
In the above definition, one has $g_{1} =g$. Moreover, the Fubini-type formula $(\ref{Fubini})$ implies 
\begin{equation}
\label{volume}
\mbox{Vol}(M, g_{t}) = \mbox{Vol}(M,g)t^{n-p}.
\end{equation}
Also, it is easy to see that if $\pi: (M, g) \to (B,j)$ is a Riemannian submersion with totally geodesic fibers, then so is $\pi: (M, g_{t}) \to (B,j)$ for any $t >0$. If $\pi: (M, g) \to (B,j)$ is a Riemannian submersion with totally geodesic fibers, then one can write $\Delta^{M}_{g_{t}}$  only with the operators on $(M,g)$, namely:
\begin{equation}
\label{variation-Laplacian}
\Delta^{M}_{g_{t}} = t^{-2}\Delta_{v} + \Delta_{h} = t^{-2}\Delta^{M} + (1-t^{-2}) \Delta_{h}.
\end{equation}
As for the canonical variation $(g_{t})_{t > 0}$, set
\begin{equation*}
\Lambda_{1}(M, t) := \lambda_{1}(g_{t})\mbox{Vol}(M,g_{t})^{2/n}.
\end{equation*}
One can easily see that the pull-back of a $\Delta^{B}_{j}$-eigenfunction is an $\Delta^{M}_{g_{t}}$-eigenfunction on $(M, g_{t})$. Hence one concludes 
\begin{equation}
\label{upper-bound}
\lambda_{1}(g_{t}) \leq \beta_{1}
\end{equation}
for any $t >0$, where $\beta_{1}$ is the smallest positive eigenvalue of $\Delta^{B}_{j}$. In addition, Grama and Lima \cite{GL} recently pointed out that for any $0<t \leq 1$, the inequality 
\begin{equation}
\label{small-t}
\lambda_{1}(g) \leq \lambda_{1}(g_{t}) 
\end{equation}
holds. Combining $(\ref{volume})$, $(\ref{upper-bound})$ and $(\ref{small-t})$, one obtains the following result:
\begin{theo}[c.f. \cite{BBB}]
\label{BBB-theo}
Let $(M, g)$ and $(B, j)$ be connected compact Riemannian manifolds of dimension $n$ and $p$ respectively. Let $\pi: (M, g) \to (B,j)$ be a Riemannian submersion each of whose fibers is connected and totally geodesic. Then for any $0<t \leq 1$, one has
\begin{equation*}
\lambda_{1}(g) \leq \lambda_{1}(g_{t}) \leq \beta_{1}
\end{equation*}
and so
\begin{equation*}
\lambda_{1}(g)\mbox{Vol}(M,g)^{2/n}t^{2(n-p)/n} \leq \Lambda_{1}(M,t) \leq \beta_{1}\mbox{Vol}(M,g)^{2/n}t^{2(n-p)/n}.
\end{equation*}
In particular, one has $\lim_{t\to 0} \Lambda_{1}(M, t) =0$. 
\end{theo}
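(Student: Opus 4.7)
The plan is to observe that Theorem \ref{BBB-theo} is essentially a bookkeeping statement: all the analytic content has been established just above in equations (\ref{upper-bound}), (\ref{small-t}), and (\ref{volume}), and the proof is a matter of combining them. I would organize the argument in three short steps.

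First, I would restate the upper bound $\lambda_{1}(g_{t}) \leq \beta_{1}$ coming from (\ref{upper-bound}). The underlying reason is that if $\phi$ is a $\Delta^{B}_{j}$-eigenfunction with eigenvalue $\beta_{1}$, then its pullback $\phi \circ \pi$ is vertically constant, hence annihilated by $\Delta_{v}$, while the horizontal Laplacian computed using the frame of Lemma \ref{lemm-BBB} reproduces the base Laplacian; using (\ref{variation-Laplacian}) we get $\Delta^{M}_{g_{t}}(\phi \circ \pi) = \beta_{1}(\phi \circ \pi)$, and $\phi \circ \pi$ has zero mean on $(M, g_{t})$ thanks to the Fubini-type formula (\ref{Fubini}). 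Thus $\beta_{1}$ lies in the spectrum of $\Delta^{M}_{g_{t}}$ as a positive eigenvalue, whence $\lambda_{1}(g_{t}) \leq \beta_{1}$.

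Second, for the lower bound $\lambda_{1}(g) \leq \lambda_{1}(g_{t})$ when $0 < t \leq 1$, I would invoke the Grama--Lima inequality (\ref{small-t}); for completeness I would sketch the one-line justification. From (\ref{variation-Laplacian}), for any smooth $f$ with zero mean,
\begin{equation*}
\int_{M} f (\Delta^{M}_{g_{t}} f)\, d\mu_{g_{t}} = t^{n-p}\left[ t^{-2}\int_{M} f(\Delta_{v}f)\, d\mu_{g} + \int_{M} f(\Delta_{h}f)\, d\mu_{g}\right].
\end{equation*}
Since $\Delta_{v}$ and $\Delta_{h}$ are nonnegative self-adjoint on $L^{2}(M,g)$ and $t^{-2} \geq 1$, the right-hand side dominates $t^{n-p}\int_{M} f(\Delta^{M} f)\, d\mu_{g}$, while the $L^{2}$ norms also differ exactly by the factor $t^{n-p}$. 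The Rayleigh quotient for $\Delta^{M}_{g_{t}}$ therefore dominates that for $\Delta^{M}$, and the variational characterization of $\lambda_{1}$ gives $\lambda_{1}(g) \leq \lambda_{1}(g_{t})$.

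Third, I would combine the two inequalities into the sandwich $\lambda_{1}(g) \leq \lambda_{1}(g_{t}) \leq \beta_{1}$ and multiply through by $\mbox{Vol}(M, g_{t})^{2/n} = \mbox{Vol}(M, g)^{2/n} t^{2(n-p)/n}$, using (\ref{volume}), to obtain the displayed bounds on $\Lambda_{1}(M, t)$. Finally, because $n > p$, the exponent $2(n-p)/n$ is strictly positive, so $t^{2(n-p)/n} \to 0$ as $t \to 0$, and the squeeze yields $\lim_{t \to 0} \Lambda_{1}(M, t) = 0$. There is no real obstacle here: the only nontrivial input is the Grama--Lima inequality, which is already cited and whose two-line proof above poses no difficulty.
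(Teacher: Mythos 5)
Your proposal is correct and follows essentially the same route as the paper: the paper likewise obtains the theorem by simply combining equations (\ref{volume}), (\ref{upper-bound}), and (\ref{small-t}), then multiplying through by the volume factor. The only addition on your part is the short (and correct) Rayleigh-quotient justification of the Grama--Lima inequality (\ref{small-t}), which the paper merely cites.
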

The fact that $\lim_{t\to 0} \Lambda_{1}(M, t) =0$ was first proved by B\'{e}rard-Bergery and Bourguignon \cite{BBB} by a different discussion. It is natural to ask when $\Lambda_{1}(M, t)$ goes to $\infty$ with $t$. B\'{e}rard-Bergery and Bourguignon observed the following:

\begin{prop}[\cite{BBB}]
\label{before-Hormander}
Let $(M, g)$ and $(B, j)$ be connected compact Riemannian manifolds of dimension $n$ and $p$ respectively. Let $\pi: (M, g) \to (B,j)$ be a Riemannian submersion each of whose fibers is connected and totally geodesic. Assume that $\Delta_{h} f=0$ implies $f$ constant and that $0$ is not an accumulation point of the spectrum of $\Delta_{h}$. Then $\Lambda_{1}(M, t)$ goes to $\infty$ as $t$ goes to $\infty$.
\end{prop}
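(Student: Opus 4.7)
The strategy is to combine the explicit formula $\Delta^{M}_{g_{t}} = t^{-2}\Delta_{v} + \Delta_{h}$ from (\ref{variation-Laplacian}) with nonnegativity of $\Delta_{v}$ to reduce the lower bound for $\lambda_{1}(g_{t})$ to a spectral-gap estimate for $\Delta_{h}$. Since $d\mu_{g_{t}} = t^{n-p} d\mu_{g}$ by (\ref{volume}), the factor $t^{n-p}$ cancels in the Rayleigh quotient and orthogonality to constants is the same for $g$ and $g_{t}$, hence
\begin{equation*}
\lambda_{1}(g_{t}) = \inf_{0 \neq f \perp 1} \frac{t^{-2} \langle \Delta_{v} f, f \rangle_{g} + \langle \Delta_{h} f, f \rangle_{g}}{\|f\|_{g}^{2}}.
\end{equation*}
Discarding the nonnegative vertical term gives, for every $t > 0$,
\begin{equation*}
\lambda_{1}(g_{t}) \geq \inf_{0 \neq f \perp 1} \frac{\langle \Delta_{h} f, f\rangle_{g}}{\|f\|_{g}^{2}},
\end{equation*}
so it suffices to bound the right-hand side below by a positive constant independent of $t$.

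Here the two hypotheses enter. The condition $\Delta_{h} f = 0 \Rightarrow f$ constant says $\ker \Delta_{h} = \mathbf{R} \cdot 1$, and the assumption that $0$ is not an accumulation point of the spectrum $\sigma(\Delta_{h})$ produces some $\delta_{0} > 0$ with $\sigma(\Delta_{h}) \subset \{0\} \cup [\delta_{0}, \infty)$. By the spectral theorem applied to the self-adjoint operator $\Delta_{h}$, any $f \perp 1$ lies in the range of the spectral projection onto $[\delta_{0}, \infty)$, so $\langle \Delta_{h}f, f\rangle_{g} \geq \delta_{0} \|f\|_{g}^{2}$. Therefore $\lambda_{1}(g_{t}) \geq \delta_{0}$ for every $t > 0$, and (\ref{volume}) yields
\begin{equation*}
\Lambda_{1}(M,t) \geq \delta_{0} \, \mbox{Vol}(M,g)^{2/n} \, t^{2(n-p)/n},
\end{equation*}
which tends to $\infty$ with $t$ because $n > p$.

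The principal subtlety, as the paper emphasizes in Section 2, is that $\Delta_{h}$ is not elliptic, so its spectrum need not be discrete and one cannot simply enumerate eigenvalues. The two assumptions of the proposition are exactly designed to substitute for this missing discreteness: together they guarantee that $0$ is an isolated point of $\sigma(\Delta_{h})$ whose associated spectral projection is the rank-one projection onto the constants, which is precisely what is needed to extract the spectral gap $\delta_{0}$ and hence a uniform positive lower bound on $\lambda_{1}(g_{t})$.
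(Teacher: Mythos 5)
Your proof is correct and is the expected argument; the paper itself states Proposition \ref{before-Hormander} as a citation to \cite{BBB} without reproving it, so there is no in-text proof to compare against. The one soft spot is the step where you identify the $L^{2}$-kernel of the self-adjoint extension of $\Delta_{h}$ with the constants: the hypothesis ``$\Delta_{h}f=0$ implies $f$ constant'' concerns functions in the natural (smooth) domain, while $\Delta_{h}$ is not elliptic, so an $L^{2}$-kernel element has no automatic regularity. This is repaired by the commutativity of $\Delta^{M}$, $\Delta_{v}$ and $\Delta_{h}$ recorded in Lemma \ref{lemm-BBB}, which (as the paper exploits in the proof of Theorem \ref{MainThm}) yields a complete orthonormal system of simultaneous eigenfunctions $\varphi_{k}^{(i)}$ of $\Delta^{M}$ and $\Delta_{h}$; each $\varphi_{k}^{(i)}$ is smooth by ellipticity of $\Delta^{M}$, so the $L^{2}$-kernel of $\Delta_{h}$ is spanned by smooth functions annihilated by $\Delta_{h}$, i.e.\ by the constants. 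With that in place, your spectral-gap bound $\langle \Delta_{h}f, f\rangle_{g} \geq \delta_{0}\|f\|_{g}^{2}$ for $f \perp 1$ is justified, and the rest of the argument---discarding the nonnegative $t^{-2}\Delta_{v}$ term, cancelling $t^{n-p}$ in the Rayleigh quotient, and multiplying by $\mbox{Vol}(M,g_{t})^{2/n}=t^{2(n-p)/n}\mbox{Vol}(M,g)^{2/n}$---is exactly right, giving the uniform bound $\lambda_{1}(g_{t})\geq\delta_{0}$ and hence $\Lambda_{1}(M,t)\to\infty$.
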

They also pointed out a sufficient condition for the assumption of Proposition \ref{before-Hormander} to hold.
\begin{prop}[\cite{BBB}]
\label{Hormander}
Let $(M, g)$ and $(B, j)$ be connected compact Riemannian manifolds of dimension $n$ and $p$ respectively. Let $\pi: (M, g) \to (B,j)$ be a Riemannian submersion each of whose fibers is connected and totally geodesic. Assume that all the  basic vector fields and their iterated Lie brackets span $TM$. Then the assumption of Proposition \ref{before-Hormander} holds and so in particular $\Lambda_{1}(M, t)$ goes to $\infty$ as $t$ goes to $\infty$.
\end{prop}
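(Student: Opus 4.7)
The plan is to verify the two conditions in Proposition \ref{before-Hormander}: that $\Delta_{h} f = 0$ forces $f$ to be constant, and that $0$ is an isolated point of the spectrum of $\Delta_{h}$. Both follow from H\"ormander's hypoellipticity theorem. Indeed, by the local expression \eqref{local-h}, once we pick an orthonormal frame $\{\widetilde{e}_{i}\}_{i=1}^{p}$ on $(B,j)$ with horizontal lifts $\{e_{i}\}_{i=1}^{p}$ (these are basic vector fields), we have $\Delta_{h} = -\sum_{i=1}^{p} e_{i}^{2} + X_{0}$ for some first-order remainder $X_{0}$. The assumption that basic vector fields and their iterated brackets span $TM$ is exactly H\"ormander's bracket-generating condition for the $e_{i}$. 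Thus H\"ormander's theorem gives that $\Delta_{h}$ is hypoelliptic and that there exist $s, C > 0$ with
\begin{equation*}
\|u\|_{H^{s}(M)} \leq C\bigl(\|\Delta_{h} u\|_{L^{2}(M,g)} + \|u\|_{L^{2}(M,g)}\bigr)
\end{equation*}
for every $u \in C^{\infty}(M)$, where $H^{s}(M)$ denotes the usual Sobolev space.

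For the kernel, hypoellipticity ensures that any $L^{2}$ solution of $\Delta_{h} f = 0$ is smooth. Combining \eqref{local-h} with the Fubini-type formula \eqref{Fubini} and integration by parts gives the Dirichlet identity $\langle \Delta_{h} f, f \rangle_{L^{2}(M,g)} = \int_{M} |\nabla_{h} f|^{2} d\mu_{g}$, where $\nabla_{h} f$ denotes the horizontal part of $\nabla^{M} f$. Hence $\Delta_{h} f = 0$ forces $e_{i} f = 0$ for every basic vector field $e_{i}$. The elementary identity $[X,Y] f = X(Yf) - Y(Xf)$ shows that the set of vector fields annihilating $f$ is closed under Lie brackets; combined with the bracket-generating hypothesis, it therefore contains every vector field on $M$, so $df \equiv 0$. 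By connectedness of $M$, the function $f$ is constant.

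For the spectral condition, the subelliptic estimate above forces the resolvent $(\Delta_{h} + I)^{-1} \colon L^{2}(M,g) \to L^{2}(M,g)$ to factor through $H^{s}(M)$, and Rellich's compact embedding $H^{s}(M) \hookrightarrow L^{2}(M,g)$ then makes this resolvent a compact self-adjoint operator. Its spectrum is discrete and accumulates only at $0$, so the spectrum of $\Delta_{h}$ itself is discrete and accumulates only at $+\infty$; in particular $0$ is not an accumulation point. Combined with the previous step, this verifies the assumptions of Proposition \ref{before-Hormander}, and yields $\Lambda_{1}(M,t) \to \infty$ as $t \to \infty$. The main obstacle is the careful invocation of H\"ormander's theorem and the globalization of its conclusions via a partition of unity, which is standard; the rest of the argument is essentially formal.
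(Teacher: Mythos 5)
The paper itself does not prove Proposition \ref{Hormander}; it simply states it and cites \cite{BBB}, so there is no in-text proof to compare against. Your argument is nonetheless the standard one and is correct: write $\Delta_h$ locally as $-\sum_{i=1}^p e_i^2 + X_0$ using \eqref{local-h}, observe that the hypothesis is exactly H\"ormander's bracket-generating condition for the $e_i$, and invoke his theorem to get hypoellipticity plus a subelliptic estimate. From the estimate, $(\Delta_h + I)^{-1}$ factors through a Sobolev space and is therefore compact and self-adjoint, giving discreteness of the spectrum and in particular an isolated zero eigenvalue; from hypoellipticity plus the Dirichlet identity $\langle \Delta_h f, f\rangle = \int_M |\nabla_h f|^2\, d\mu_g$ you get that a function in the kernel is smooth with vanishing horizontal derivative, and the bracket-generating hypothesis together with the identity $[X,Y]f = X(Yf)-Y(Xf)$ then kills $df$ everywhere, so $f$ is constant. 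This matches the route in \cite{BBB} and verifies both assumptions of Proposition \ref{before-Hormander}.

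Two minor points worth stating explicitly if you write this up. First, the hypothesis speaks of \emph{all} basic vector fields, whereas you apply H\"ormander's condition to the finite local frame $e_1,\dots,e_p$; this is fine because any basic field is locally a $C^\infty(B)$-linear combination of the $e_i$, and the Leibniz rule for brackets shows the Lie algebra generated by the $e_i$ already spans $TM$ wherever the one generated by all basic fields does. Second, for the resolvent argument you tacitly use that $\Delta_h$ (say its Friedrichs extension) is self-adjoint and that the subelliptic estimate extends from smooth functions to the operator domain by density in the graph norm; both are routine but should be mentioned.
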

Recall that a vector field $X$ on $M$ is called \textit{basic} if $X$ is the horizontal lift of some vector field on $B$. The assumption of Proposition \ref{Hormander} is known as the H\"{o}rmnader condition. However, one cannot obtain an explicit estimate for $\lambda_{1}(g_{t})$ from the H\"{o}rmnader condition.

\section{Proof of Theorem \ref{MainThm}}
Before proving Theorem \ref{MainThm}, we remark that the assumption of Theorem \ref{MainThm} requires $n \geq 3$. In fact, if $n=2$, then the assumption and the Gauss--Bonnet theorem implies that $M$ is homeomorphic to $S^{2}$ or $\mathbf{R}P^{2}$. Since $B$ is a one-dimensional connected compact manifold, $B$ is homeomorphic to $S^{1}$. However, it is a standard topological fact that neither $S^{2}$ nor $\mathbf{R}P^{2}$ admit submersions to $S^{1}$. Hence we conclude $n \geq 3$.

The essential part of the proof of Theorem \ref{MainThm} is the following lemma:
\begin{lemm}
\label{key-lemma}
Let $(M, g)$ and $(B, j)$ be connected compact Riemannian manifolds of dimension $n$ and $p$ respectively. Let $\pi: (M, g) \to (B,j)$ be a Riemannian submersion each of whose fibers is connected and totally geodesic. Assume that there exists $\widetilde{c} > 0$ such that 
\begin{equation*}
\mbox{Ric}^{M} \geq \widetilde{c}g,
\end{equation*}
where $\mbox{Ric}^{M}$ is the Ricci tensor of $(M,g)$. In case $p \leq n-2$, assume also that there exists $0 \leq c < \widetilde{c}$ such that 
\begin{equation*}
\mbox{Ric}^{F_{y}} = c(\iota^{*}g)
\end{equation*}
for any $y \in B$, where $F_{y}$ is a fiber of $y$ and $\iota: F_{y} \hookrightarrow M$ is the inclusion map. Set
\begin{equation*}
\alpha_{k} := \left[(\lambda_{k}(g)-c)+\frac{\lambda_{k}(g)^{2}}{n(\lambda_{k}(g)-\widetilde{c})} \right] p
\end{equation*}
and 
\begin{equation*}
\beta_{k} :=  \frac{\widetilde{c}-c}{\lambda_{k}(g)-\widetilde{c} } \times \frac{\lambda_{k}(g)^{2}p}{n},
\end{equation*}
where in case $p=n-1$, substitute $c=0$. Define the quadratic function $Q_{k}(x)= (p+1)x^{2}-\alpha_{k}x+\beta_{k}$ on $[0, \infty)$. Let $f$ be a smooth function on $M$ such that $\int_{M}f^{2} d\mu_{g} =1$, $\Delta^{M}f= \lambda_{k}(g)f$, $\Delta_{h}f=af$. Then $a$ satisfies $a > \widetilde{c}-c$ or $Q_{k}(a) \leq 0$.

Furthermore, $a$ satisfies $a \leq \widetilde{c}-c$ and $Q_{k}(a) = 0$ if and only if $(M,g)$ is isometric to the odd dimensional round sphere $S^{2m+1}(\sqrt{2m/\widetilde{c}})$ and $f$ is a linear combination of coordinate functions, namely, $k=1$.
\end{lemm}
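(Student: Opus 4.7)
The plan is to modify the integrated Bochner--Lichnerowicz argument so that it exploits the block structure of $\nabla^{2}f$ relative to the horizontal--vertical splitting $TM=H\oplus V$. Fix a joint eigenfunction $f$ of the commuting operators $\Delta^{M},\Delta_{h},\Delta_{v}$ with $\int_{M}f^{2}\,d\mu_{g}=1$, and set $b:=\lambda_{k}(g)-a$, so that $\Delta_{v}f=bf$. Because the fibers are totally geodesic, $f|_{F_{y}}$ is a $\Delta^{F_{y}}$-eigenfunction of eigenvalue $b$, the vertical--vertical block $T_{VV}$ of $\nabla^{2}f$ agrees pointwise with the intrinsic Hessian of $f|_{F_{y}}$ on $F_{y}$, and by $(\ref{local-h})$ the horizontal--horizontal block $T_{HH}$ has pointwise trace $-af$.

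Three ingredients produce the quadratic inequality. First, the Bochner formula on $(M,g)$ combined with $\mbox{Ric}^{M}\ge\widetilde{c}g$ and $\int_{M}|\nabla f|^{2}\,d\mu_{g}=\lambda_{k}(g)$ gives
\begin{equation*}
\int_{M}|\nabla^{2}f|^{2}\,d\mu_{g}=\lambda_{k}(g)^{2}-\int_{M}\mbox{Ric}^{M}(\nabla f,\nabla f)\,d\mu_{g}\leq\lambda_{k}(g)\bigl[\lambda_{k}(g)-\widetilde{c}\bigr].
\end{equation*}
Second, the Bochner formula applied fiberwise to $f|_{F_{y}}$ on the Einstein fiber $(F_{y},\iota^{*}g)$ (with eigenvalue $b$), followed by integration over $B$ via $(\ref{Fubini})$, yields the exact identity $\int_{M}|T_{VV}|^{2}\,d\mu_{g}=b(b-c)$. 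Third, Cauchy--Schwarz on the symmetric $p\times p$ matrix $T_{HH}$ gives $|T_{HH}|^{2}\geq a^{2}f^{2}/p$. Decomposing $|\nabla^{2}f|^{2}=|T_{HH}|^{2}+|T_{VV}|^{2}+2|T_{HV}|^{2}$, discarding the nonnegative cross term, and substituting $b=\lambda_{k}(g)-a$ produces the auxiliary estimate
\begin{equation*}
(p+1)a^{2}-p\bigl(2\lambda_{k}(g)-c\bigr)a+p(\widetilde{c}-c)\lambda_{k}(g)\leq 0.
\end{equation*}
A direct computation shows that $Q_{k}(a)$ minus the left-hand side above equals
\begin{equation*}
\frac{p\lambda_{k}(g)\bigl[(n-1)\lambda_{k}(g)-n\widetilde{c}\bigr]}{n\bigl(\lambda_{k}(g)-\widetilde{c}\bigr)}\bigl[a-(\widetilde{c}-c)\bigr].
\end{equation*}
The factor $(n-1)\lambda_{k}(g)-n\widetilde{c}$ is nonnegative by Lichnerowicz (already a consequence of the ambient Bochner estimate above), so in the regime $a\leq\widetilde{c}-c$ the correction is $\leq 0$ and $Q_{k}(a)\leq 0$ follows, settling the dichotomy.

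For the equality statement, I would trace every inequality backwards. Assuming $a\leq\widetilde{c}-c$ and $Q_{k}(a)=0$ forces both the auxiliary inequality and the correction to vanish; this in turn forces Cauchy--Schwarz saturation $T_{HH}=-(af/p)g|_{H}$, the cross block $T_{HV}=0$, the sharp Ricci bound $\mbox{Ric}^{M}(\nabla f,\nabla f)=\widetilde{c}|\nabla f|^{2}$, and either $a=\widetilde{c}-c$ or sharp Lichnerowicz $(n-1)\lambda_{k}(g)=n\widetilde{c}$. Combining these tight conditions with the fiber Bochner identity upgrades the saturations to the global Obata equation $\nabla^{2}f=-(\lambda_{k}(g)/n)f g$ on $M$; Obata's rigidity theorem then identifies $(M,g)$ with the round sphere of radius $\sqrt{(n-1)/\widetilde{c}}$ and $f$ with a first eigenfunction, so $k=1$. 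Because the hypothesized Riemannian submersion has connected totally geodesic fibers of positive codimension, standard topological obstructions rule out even-dimensional spheres, leaving precisely $S^{2m+1}(\sqrt{2m/\widetilde{c}})$. The main obstacle will be this equality analysis: reconciling the tightness conditions from the fiber Bochner, the ambient Bochner, the horizontal Cauchy--Schwarz, and the Ricci lower bound simultaneously, and extracting Obata's equation from them, requires some delicate bookkeeping.
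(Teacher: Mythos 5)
Your derivation of the inequality is correct and takes a genuinely different route from the paper. The paper manipulates the integral identity $a=\int_{M}|\nabla f|^{2}-\int_{B}\int_{F_{y}}|\nabla^{F_{y}}f|^{2}$ directly, substituting the pointwise Bochner inequalities, then applies Cauchy--Schwarz twice: once to the \emph{full} Hessian ($|\mbox{Hess}^{M}f|^{2}\geq\lambda_{k}^{2}f^{2}/n$) and once to the horizontal block. You instead decompose $|\nabla^{2}f|^{2}$ into the three orthogonal blocks $T_{HH},T_{VV},T_{HV}$, use the integrated ambient Bochner estimate $\int_{M}|\nabla^{2}f|^{2}\leq\lambda_{k}(\lambda_{k}-\widetilde{c})$ and the exact fiber identity $\int_{M}|T_{VV}|^{2}=b(b-c)$, apply Cauchy--Schwarz only on the $T_{HH}$ block, and then bridge the resulting auxiliary quadratic to $Q_{k}$ by an explicit algebraic identity. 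I verified both the auxiliary inequality and your correction formula; since the Lichnerowicz bound makes the prefactor of $[a-(\widetilde{c}-c)]$ nonnegative, your deduction that $a\leq\widetilde{c}-c$ implies $Q_{k}(a)\leq 0$ is sound. This is a clean and transparent repackaging that avoids the paper's somewhat ad hoc splitting of $(\ref{a-bound})$, and it exhibits explicitly which inequality (the $|T_{HV}|$ discard and the horizontal Cauchy--Schwarz) is responsible for each piece of $Q_{k}$.

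Your equality analysis, however, has a real gap, and it is exactly where you flagged uncertainty. When $a\leq\widetilde{c}-c$ and $Q_{k}(a)=0$, you deduce that the auxiliary inequality saturates and the correction vanishes, which forces either $(n-1)\lambda_{k}=n\widetilde{c}$ or $a=\widetilde{c}-c$. In the first subcase, sharp Lichnerowicz gives Obata and you are done. But in the subcase $a=\widetilde{c}-c$ — which is not a degenerate edge case; it is precisely the value attained on the round sphere, where $a=p\widetilde{c}/(n-1)=\widetilde{c}-c$ — the correction vanishes automatically without forcing $\lambda_{k}=n\widetilde{c}/(n-1)$, and the saturations you have ($T_{HV}=0$, $T_{HH}$ a multiple of $g|_{H}$, $\mbox{Ric}^{M}(\nabla f,\nabla f)=\widetilde{c}|\nabla f|^{2}$) say nothing about $T_{VV}$. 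The fiber Bochner formula is an exact identity in your scheme, not an inequality, so there is no "fiber Cauchy--Schwarz" whose saturation would give $T_{VV}$ proportional to $g|_{V}$. Without that, you cannot assemble the Obata equation $\nabla^{2}f=-(\lambda_{k}/n)fg$. The paper's proof is structured so that $Q_{k}(a)=0$ is asserted to force equality in the full-Hessian Cauchy--Schwarz $(\ref{Hessian})$, from which Obata follows in one step; since you never invoke that inequality, your route has no hook on which to hang the vertical block. To close this you would need a separate argument for $a=\widetilde{c}-c$, for instance by reintroducing the full-Hessian Cauchy--Schwarz somewhere in your chain, or by squeezing $T_{VV}$ against the fiber Lichnerowicz inequality and showing the fiber is forced to a round sphere which, combined with the horizontal rigidity, recovers Obata globally.
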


\begin{proof}
We assume $a \leq \widetilde{c}-c$ and prove $Q_{k}(a) \leq 0$. The Lichnerowicz--Obata theorem implies $\lambda_{1}(g) \geq n\widetilde{c}/(n-1)$. If $f$ is the pull-back of a function on $B$, then one has
\begin{equation*}
a = \lambda_{k}(g) \geq \lambda_{1}(g) \geq \frac{n}{n-1}\widetilde{c}>\widetilde{c}-c,
\end{equation*}
which contradicts the assumption. Hence $f$ is not a pull-back of a function on $B$. One has
\begin{equation*}
a= \int_{M} f\Delta_{h}f \: d\mu_{g}= \int_{M} f(\Delta^{M}- \Delta_{v})f \: d\mu_{g}.
\end{equation*}
Hence the Fubini-type formula $(\ref{Fubini})$ and Stokes' theorem imply
\begin{equation}
\label{a}
a = \int_{M} |\nabla^{M}f|^{2} d\mu_{g} - \int_{B}\left[  \int_{F_{y}} |\nabla^{F_{y}}(f\restriction_{F_{y}})|^{2} d\mu_{\iota^{*}g}(z) \right] d\mu_{j}(y).
\end{equation}
\underline{Case: $p \leq n-2$:} The Bochner formula says that 
\begin{equation*}
-\frac{1}{2} \Delta^{M} |\nabla^{M}f|^{2} = |\mbox{Hess}^{M}f|^{2} - g(\nabla^{M} f, \nabla^{M}(\Delta^{M}f))+ \mbox{Ric}^{M}(\nabla^{M}f, \nabla^{M}f)
\end{equation*}
and 
\begin{equation}
\label{comparison}
\begin{split}
-\frac{1}{2} \Delta^{F_{y}} |\nabla^{F_{y}}(f\restriction_{F_{y}})|^{2} &= |\mbox{Hess}^{F_{y}}(f\restriction_{F_{y}})|^{2} - g(\nabla^{F_{y}}( f\restriction_{F_{y}}), \nabla^{F_{y}}(\Delta_{v}f)) \\
&\quad + \mbox{Ric}^{F_{y}}(\nabla^{F_{y}}(f\restriction_{F_{y}}), \nabla^{F_{y}}(f\restriction_{F_{y}})) \\
\end{split}
\end{equation}
hold for any $ y \in B$. By the assumption, these equations imply
\begin{equation}
\label{Bochner-1}
-\frac{1}{2} \Delta^{M} |\nabla^{M}f|^{2} \geq |\mbox{Hess}^{M}f|^{2} - (\lambda_{k}(g)-\widetilde{c}) |\nabla^{M} f|^{2}
\end{equation}
and 
\begin{equation}
\label{Bochner-2}
-\frac{1}{2} \Delta^{F_{y}} |\nabla^{F_{y}}(f\restriction_{F_{y}})|^{2} = |\mbox{Hess}^{F_{y}}(f\restriction_{F_{y}})|^{2} -(\lambda_{k}(g)-a-c) |\nabla^{F_{y}}( f\restriction_{F_{y}})|^{2}.
\end{equation}
Since $f$ is not a pull-back of a function on $B$, one can easily obtain $\lambda_{k}(g)-a-c >0$. Thus combining $(\ref{a})$, $(\ref{Bochner-1})$ and $(\ref{Bochner-2})$, one obtains
\begin{equation*}
\begin{split}
a &\geq \frac{1}{\lambda_{k}(g)-\widetilde{c}} \int_{M} |\mbox{Hess}^{M}f|^{2}d\mu_{g} \\
& \quad  - \frac{1}{\lambda_{k}(g)-a-c} \int_{B}\left[  \int_{F_{y}} |\mbox{Hess}^{F_{y}}(f\restriction_{F_{y}})|^{2} d\mu_{\iota^{*}g}(z) \right] d\mu_{j}(y). \\
\end{split}
\end{equation*}
Hence the assumption $a \leq \widetilde{c}-c$  implies 
\begin{equation}
\label{a-bound}
\begin{split}
a &\geq \left( \frac{1}{\lambda_{k}(g)-\widetilde{c}}  - \frac{1}{\lambda_{k}(g)-a-c} \right) \int_{M} |\mbox{Hess}^{M}f|^{2}d\mu_{g} \\
& \quad  + \frac{1}{\lambda_{k}(g)-a-c} \int_{B}\left[  \int_{F_{y}}(|\mbox{Hess}^{M}f|^{2} - |\mbox{Hess}^{F_{y}}(f\restriction_{F_{y}})|^{2} )d\mu_{\iota^{*}g}(z) \right] d\mu_{j}(y). \\
\end{split}
\end{equation}
We evaluate the second term on the right hand side. Let $\{\tilde{e}_{i} \}_{i=1}^{p}$ be a local orthonormal frame around $y \in B$. For any $1 \leq i \leq p$, let $e_{i}$ be the horizontal lift of $\tilde{e}_{i}$. Let $\{e_{i}\}_{i=p+1}^{n}$ be local vertical vector fields such that $\{e_{i}\}_{i=1}^{n}$ is a local orthonormal frame on $(M,g)$. Then for any $p+1 \leq i \leq n$, one has 
\begin{equation*}
e_{i}(f\restriction_{F_{y}})_{x} =df_{\iota(x)}(d\iota_{x} (e_{i}) )= df_{\iota(x)}(e_{i})= (e_{i}f)_{\iota(x)}, 
\end{equation*}
which implies
\begin{equation*}
e_{i}(f\restriction_{F_{y}}) = (e_{i}f)\restriction_{F_{y}}. 
\end{equation*}
Using this fact and the assumption that $F_{y}$ is totally geodesic, one obtains
\begin{equation*}
\begin{split}
\mbox{Hess}^{M}f(e_{i_{1}}, e_{i_{2}})\restriction_{F_{y}} &= e_{i_{1}}(e_{i_{2}}(f\restriction_{F_{y}}))- \left((\nabla^{M}_{e_{i_{1}}}e_{i_{2}})f \right)\restriction_{F_{y}} \\
&= e_{i_{1}}(e_{i_{2}}(f\restriction_{F_{y}}))- \left((\nabla^{F_{y}}_{e_{i_{1}}}e_{i_{2}})f \right)\restriction_{F_{y}} \\
&= e_{i_{1}}(e_{i_{2}}(f\restriction_{F_{y}}))- \nabla^{F_{y}}_{e_{i_{1}}}e_{i_{2}}(f\restriction_{F_{y}}) \\
&= \mbox{Hess}^{F_{y}}(f\restriction_{F_{y}}) (e_{i_{1}}, e_{i_{2}}) \\
\end{split}
\end{equation*}
for any $p+1 \leq i_{1}, i_{2} \leq n$. Set
\begin{equation*}
|\mbox{Hess}_{h}f|^{2} := \sum_{i_{1}, i_{2}=1}^{p}\left[ e_{i_{1}}(e_{i_{2}}f)- (\nabla^{M}_{e_{i_{1}}}e_{i_{2}})f \right]^{2}.
\end{equation*}
Then using $(\ref{local-h})$ and the Cauchy--Schwarz inequality, one obtains
\begin{equation}
\label{horizontal-Hessian}
|\mbox{Hess}^{M}f|^{2} - |\mbox{Hess}^{F_{y}}(f\restriction_{F_{y}})|^{2} \geq  |\mbox{Hess}_{h}f|^{2} \geq \frac{(\Delta_{h}f)^{2}}{p} = \frac{a^{2}}{p}f^{2}
\end{equation}
on each fiber $F_{y}$. The Cauchy--Schwarz inequality implies 
\begin{equation}
\label{Hessian}
|\mbox{Hess}^{M}f|^{2} \geq \frac{(\Delta^{M}f)^{2}}{n} =\frac{ \lambda_{k}(g)^{2}}{n}f^{2}.
\end{equation}
Combining $(\ref{a-bound})$, $(\ref{horizontal-Hessian})$ and $(\ref{Hessian})$, one obtains
\begin{equation*}
a \geq  \left( \frac{1}{\lambda_{k}(g)-\widetilde{c}}  - \frac{1}{\lambda_{k}(g)-a-c} \right) \frac{\lambda_{k}(g)^{2}}{n}+ \frac{a^{2}}{(\lambda_{k}(g)-a-c)p}.
\end{equation*}
After multiplying both sides by $(\lambda_{k}(g)-a-c)p >0$, one obtains $0 \geq Q_{k}(a)$.

\underline{Case: $p=n-1$} Suppose $p=n-1$. Then by the assumption of Theorem \ref{MainThm}, each fiber $F_{y}$ is a closed geodesic on $(M,g)$. Hence the Bochner formula $(\ref{comparison})$ cannot be used. However the following formula, which immediately follows from the Leibniz rule, can be used alternatively: for a smooth function $f$ on $M$,
\begin{equation*}
\frac{1}{2}(((f'(s))^{2})'' = (f''(s))^{2} + f'(s)f'''(s),
 \end{equation*}
 where $s$ is the parameter of $F_{y}$ with unit speed and $f(s)$ is the restriction of $f$ on $F_{y}$. Consider the second derivative as $-\Delta^{F_{y}}$ or $\mbox{Hess}^{F_{y}}$, and the first derivative as $\nabla^{F_{y}}$. Let $c=0$ in the above discussion. Then we can prove the assertion in a similar manner and so we omit the detailed proof.
 
 Next we prove the second half of the assertion. If $a$ satisfies $a \leq \widetilde{c}-c$ and $Q_{k}(a) = 0$, then the equality holds in $(\ref{Hessian})$. Hence the equality condition of the Cauchy--Schwarz inequality implies 
 \begin{equation*}
 \mbox{Hess}^{M}f=- \frac{\lambda_{k}(g)}{n}f g.
 \end{equation*}
Thus the result due to Obata \cite{Obata}, which can be used to prove the equality condition for the Lichnerowicz--Obata theorem, implies that $(M,g)$ is isometric to the round sphere of radius $\sqrt{n/\lambda_{k}(g)}= \sqrt{(n-1)/\widetilde{c}}$, and that $f$ is a linear combination of coordinate functions and so $k=1$. The classification result due to Escobales \cite{Escobales} and Ranjan \cite{Ranjan} implies that if the round sphere $S^{n}$ admits a Riemannian submersions with totally geodesic fibers, then $n$ must be odd. 

We show the converse. Assume that $(M,g)$ is isometric to $S^{n}(\sqrt{(n-1)/\widetilde{c}})$ and $n$ is odd. Since it is classically known that a connected totally geodesic submanifold of a round sphere is a lower dimensional round sphere, each fiber $F_{y}$ of $\pi$ is isometric to $S^{n-p}(\sqrt{(n-1)/\widetilde{c}})$. In particular, one has 
\begin{equation*}
\mbox{Ric}^{F_{y}} = \frac{(n-p-1)\widetilde{c}}{n-1}g
\end{equation*}
and so $c= (n-p-1)\widetilde{c}/(n-1)$. If $f$ is a linear combination of coordinate function, then $f\restriction_{F_{y}}$ is again a linear combination of coordinate functions. Hence one has 
\begin{equation*}
\Delta^{M}f= \frac{n\widetilde{c}}{n-1}f \quad \mbox{and} \quad \Delta_{v}f = \frac{(n-p)\widetilde{c}}{n-1}f.
\end{equation*}
Thus we obtain
\begin{equation*}
\Delta_{h}f = \frac{p\widetilde{c}}{n-1}f
\end{equation*}
and so $a=p\widetilde{c}/(n-1)$. In particular, one has $a = \widetilde{c}-c$ = $p\widetilde{c}/(n-1)$. Furthermore, one has
\begin{equation*}
\begin{split}
Q_{1}(x) &= (p+1)x^{2}- \frac{p(n+p+1)\widetilde{c}}{n-1}x + \frac{np^{2}\widetilde{c}^{2}}{(n-1)^{2}} \\
&= \left(x- \frac{p\widetilde{c}}{n-1} \right) \left((p+1)x - \frac{np\widetilde{c}}{n-1} \right). \\
\end{split}
\end{equation*}
Hence one concludes $Q_{1}(a) = 0$. The proof is completed.
\end{proof}
In the second half of the above proof, we have used the classification result due to  Escobales \cite{Escobales} and Ranjan \cite{Ranjan}. See Section 4 for the classification result.

Using the above lemma, we prove the following lemma:

\begin{lemm}
\label{lemma}
Let $(M, g)$ and $(B, j)$ be connected compact Riemannian manifolds of dimension $n$ and $p$ respectively. Let $\pi: (M, g) \to (B,j)$ be a Riemannian submersion each of whose fibers is connected and totally geodesic. The fibers endowed with induced metrics are isometric to each other. Assume that there exists $\widetilde{c} > 0$ such that 
\begin{equation*}
\mbox{Ric}^{M} \geq \widetilde{c}g,
\end{equation*}
where $\mbox{Ric}^{M}$ is the Ricci tensor of $(M,g)$. In case $p \leq n-2$, assume also that there exists $0 \leq c < n-1$ such that 
\begin{equation*}
\mbox{Ric}^{F_{y}} = c(\iota^{*}g)
\end{equation*}
for any $y \in B$, where $F_{y}$ is a fiber of $y$ and $\iota: F_{y} \hookrightarrow M$ is the inclusion map. Let $f$ be a smooth function on $M$ such that $\int_{M}f^{2} d\mu_{g} =1$, $\Delta^{M}f= \lambda_{k}(g)f$, $\Delta_{h}f=af$. Then one has 
\begin{equation}
\label{a-lower-bound}
a > \frac{\widetilde{c}-c}{n+1},
\end{equation}
where $c$ is regarded as 0 in case $p=n-1$. 
\end{lemm}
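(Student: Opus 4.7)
The plan is to deduce \eqref{a-lower-bound} directly from Lemma~\ref{key-lemma}, which for the eigenfunction $f$ provides the dichotomy that either $a>\widetilde{c}-c$ or $Q_{k}(a)\leq 0$. Since $n\geq 3$, one has $(\widetilde{c}-c)/(n+1)<\widetilde{c}-c$, so in the first alternative the desired bound is immediate. Hence the substance of the proof consists in handling the second alternative.

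In the second alternative, the quadratic $Q_{k}(x)=(p+1)x^{2}-\alpha_{k}x+\beta_{k}$ opens upward (its leading coefficient $p+1$ is positive), so the condition $Q_{k}(a)\leq 0$ forces $a$ to lie in the interval $[x_{-},x_{+}]$ bounded by the two real roots of $Q_{k}$. It therefore suffices to show that the smaller root $x_{-}$ strictly exceeds $x_{0}:=(\widetilde{c}-c)/(n+1)$, and this is equivalent to verifying both
\begin{equation*}
Q_{k}(x_{0})>0 \qquad \text{and} \qquad x_{0}<\frac{\alpha_{k}}{2(p+1)}.
\end{equation*}

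For the first condition, I would substitute $x_{0}$ into $Q_{k}$, factor out $(\widetilde{c}-c)/(n+1)$, and simplify the remaining bracket using the algebraic identity $-p(\lambda-c)+p\lambda^{2}/(\lambda-\widetilde{c})=p\bigl((c+\widetilde{c})\lambda-c\widetilde{c}\bigr)/(\lambda-\widetilde{c})$, writing $\lambda:=\lambda_{k}(g)$ for brevity. The Lichnerowicz--Obata bound $\lambda\geq n\widetilde{c}/(n-1)>\widetilde{c}$ (which is already invoked inside the proof of Lemma~\ref{key-lemma}) then makes both $\lambda-\widetilde{c}$ and $(c+\widetilde{c})\lambda-c\widetilde{c}>\widetilde{c}^{2}$ strictly positive, so every surviving term in the bracket is positive. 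For the second condition, discarding the nonnegative term $\lambda^{2}/(n(\lambda-\widetilde{c}))$ inside $\alpha_{k}$ gives the crude estimate $\alpha_{k}/(2(p+1))\geq p(\lambda-c)/(2(p+1))$, and this strictly exceeds $x_{0}$ because $\lambda-c>\widetilde{c}-c$ and because the elementary arithmetic inequality $p(n-1)\geq 2$ holds whenever $p\geq 1$ and $n\geq 3$.

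The case $p=n-1$ requires no separate argument: as prescribed in the statement of Lemma~\ref{key-lemma}, one simply sets $c=0$ throughout and the same dichotomy holds with the same quadratic. The main subtlety I anticipate is purely structural, namely the need to pair $Q_{k}(x_{0})>0$ with the vertex comparison $x_{0}<\alpha_{k}/(2(p+1))$; positivity of $Q_{k}$ at $x_{0}$ alone is not enough, since without the vertex bound $x_{0}$ could a priori lie to the right of the larger root $x_{+}$, in which case the conclusion $a>x_{0}$ would be false. Once these two conditions are secured, $x_{0}<x_{-}\leq a$, and the proof is complete.
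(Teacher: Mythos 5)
Your proposal is correct and follows essentially the same strategy as the paper: apply the dichotomy from Lemma~\ref{key-lemma}, then in the nontrivial case show $Q_{k}\bigl((\widetilde{c}-c)/(n+1)\bigr)>0$ together with the vertex inequality $(\widetilde{c}-c)/(n+1)<\alpha_{k}/(2(p+1))$. The only superficial difference is that the paper verifies $Q_{k}(x_{0})>0$ via the substitution $x_{0}=n^{-l}$ and verifies the vertex bound via the chain $\alpha_{k}/(2(p+1))>\tfrac14[\lambda_{1}(g)-c+\lambda_{1}(g)/n]\geq\tfrac14[(n+1)\widetilde{c}/(n-1)-c]>(\widetilde{c}-c)/4\geq(\widetilde{c}-c)/(n+1)$, whereas you carry out equivalent algebra by factoring $x_{0}$ out of $Q_{k}(x_{0})$ and using $p(n-1)\geq 2$.
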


\begin{proof}
If one has $a > \widetilde{c}-c$, then there is nothing to prove. Hence we assume $a \leq \widetilde{c}-c$. Then Lemma \ref{key-lemma} implies $Q_{k}(a) \leq 0$. Since $\Delta_{h}$ has nonnegative eigenvalues on the space of $\lambda_{k}(g)$-eigenfunctions, the quadratic function $Q_{k}$ has one or two roots. First we show that $(\widetilde{c}-c)/(n+1)$ sits on the left hand side of the axis of symmetry of $Q_{k}$. The axis of symmetry is given by

\begin{equation*}
 x= \frac{\alpha_{k}}{2(p+1)} = \frac{p}{2(p+1)} \left[\lambda_{k}(g)-c+\frac{\lambda_{k}(g)^{2}}{n(\lambda_{k}(g)-\widetilde{c})} \right].
\end{equation*}
Hence the fact that $p/2(p+1)$ is the smallest for $p=1$ and the Lichnerowicz--Obata theorem imply
\begin{equation*}
\begin{split}
\frac{\alpha_{k}}{2(p+1)} &> \frac{1}{4}\left[\lambda_{1}(g) -c+\frac{\lambda_{1}(g)}{n} \right] \\
& \geq \frac{1}{4} \left( \frac{n+1}{n-1}\widetilde{c}-c \right) \\
& > \frac{\widetilde{c}-c }{4}. \\
\end{split}
\end{equation*}
As we have remarked at the beginning of this section, we have $n \geq 3$ and hence $(\widetilde{c}-c)/(n+1)$ sits on the left hand side of the axis of symmetry of $Q_{k}$.

For simplicity, we consider the positive real number $l$ satisfying $n^{-l} = (\widetilde{c}-c)/(n+1)$. Then we have
\begin{equation*}
Q_{k}(n^{-l}) =  (p+1) n^{-2l}+pcn^{-l}  + \frac{\lambda_{k}(g)p}{n^{l+1}(\lambda_{k}(g)-\widetilde{c})}\left[ n\widetilde{c}+\lambda_{k}(g) \{ n^{l}(\widetilde{c}-c)-n-1 \} \right]. 
\end{equation*}
Hence by the definition of $l$, $Q_{k}(n^{-l})$ is positive. Thus we conclude $(\ref{a-lower-bound})$.
\end{proof}

From the second half of Lemma \ref{key-lemma}, it seems difficult to obtain a sharp lower bound for the eigenvalue a of the horizontal Laplacian. We prove Theorem \ref{MainThm}.

\begin{proof}[Proof of Theorem \ref{MainThm}]
The right inequality of $(\ref{main-estimate})$ is nothing but $(\ref{upper-bound})$ and the assertion about its equality condition is clear. We prove the left inequality. Since $\Delta^{M}$ and $\Delta_{h}$ are commutative, there exists a complete orthonormal system of eigenfunctions $\{\varphi_{k}^{(i)} \}_{k \in \mathbf{N}\cup \{0\}, 1\leq i \leq m(k)}$ in $L^{2}(M,g)$ such that $\Delta^{M}\varphi_{k}^{(i)} = \lambda_{k}(g) \varphi_{k}^{(i)}$, $\Delta_{h}\varphi_{k}^{(i)} = a_{k,i}\varphi_{k}^{(i)}$ and  $a_{k,1} \leq a_{k,2} \leq \cdots \leq a_{k, m(i)}$, where $m(i)$ is the multiplicity of $\lambda_{k}(g)$. Then $(\ref{variation-Laplacian})$ implies
\begin{equation*}
\Delta^{M}_{g_{t}} \varphi_{k}^{(i)} =  \left[ t^{-2}\lambda_{k}(g) +(1-t^{-2})a_{k,i}  \right]\varphi_{k}^{(i)}
 \end{equation*}
 and so one has 
 \begin{equation*}
\langle \Delta^{M}_{g_{t}} \varphi_{k}^{(i)}, \varphi_{k}^{(i)} \rangle_{L^{2}(g)}  =   \left[ t^{-2}\lambda_{k}(g) +(1-t^{-2})a_{k,i}  \right].
\end{equation*}
Hence one obtains
\begin{equation*}
\lambda_{1}(g_{t}) = \inf_{k \in \mathbf{N}} \left[ t^{-2}\lambda_{k}(g) +(1-t^{-2})a_{k,1}  \right].
\end{equation*}
Hence for any $t \geq 1$, the Lichnerowicz--Obata theorem and the above lemma imply
\begin{equation}
\begin{split}
\lambda_{1}(g_{t}) &\geq t^{-2} \lambda_{1}(g_{t}) + (1-t^{-2})\cdot \frac{\widetilde{c}-c}{n+1} \\
&\geq \frac{\widetilde{c}-c}{n+1} + t^{-2}\left( \frac{n^{2}+1}{n^{2}-1}\widetilde{c}+\frac{c}{n+1} \right). \\
\end{split}
\end{equation}
If the equality holds in the above inequality, then one must have $t=1$ since $(\ref{a-lower-bound})$ is a strict inequality. Furthermore, the Lichnerowicz--Obata theorem implies that $(M,g)$ is isometric to the round sphere $S^{n}(\sqrt{(n-1)/\widetilde{c}})$. The classification result due to Escobales \cite{Escobales} and Ranjan \cite{Ranjan} imply that $n$ is odd. Conversely, if $t=1$ and $(M,g)$ is isometric to the odd dimensional round sphere $S^{2m+1}(\sqrt{2m/\widetilde{c}})$, then the equality clearly holds. 

Also, since we have $\mbox{Vol}(M,g_{t}) = t^{n-p}\mbox{Vol}(M,g)$, we conclude
\begin{equation*}
\Lambda_{1}(M, t) = O(t^{2(n-p)/n}).
\end{equation*}
\end{proof}

\section{Stability in the Yamabe Problem}
In this section, we apply Theorem \ref{MainThm} to the stability problem of critical points of the Yamabe functional. Before stating the application, we briefly review basic facts about the Yamabe problem. (For details, see \cite{Aubin2}, \cite{Otoba} and \cite{LPZ} for example.) Let $M$ be a compact (connected) manifold of dimension $n$ (without boundary). For a conformal class $C$ on $M$, the Yamabe functional $Y_{C}$ is defined as the normalized Einstein--Hilbert functional restricted to $C$, namely:
\begin{equation*}
Y_{C}: C \to \mathbf{R}, \quad Y_{C}(g) = \mbox{Vol}(M,g)^{(2-n)/n} \int_{M} S(g) \: d\mu_{g},
\end{equation*}
where $S_{g}$ is the scalar curvature defined by $g$. The Yamabe problem is the minimization problem of $Y_{C}$. By virtue of the combined efforts of Yamabe \cite{Yamabe}, Trudinger \cite{Trudinger}, Aubin \cite{Aubin} and Schoen \cite{Schoen}, it is now known that there exists a minimizer of the Yamabe functional $Y_{C}$ for any conformal class $C$ on any compact manifold. A metric $g \in C$ is a critical point of $Y_{C}$ if and only if $S(g)$ is constant on $M$. Let $g \in C$ be a critical point of $Y_{C}$ and  $L^{2}_{0}(M,g)$ the space of functions in $L^{2}(M,g)$ whose integral over $(M,g)$ is 0. The \textit{Jacobi operator} $J_{g}: L^{2}_{0}(M,g) \to L^{2}_{0}(M,g)$ is defined by
\begin{equation*}
J_{g} := \Delta_{g} -\frac{S(g)}{n-1} \cdot \mbox{Id}.
\end{equation*}
The critical point $g$ is called \textit{nondegenerate} if $\mbox{Ker} J_{g} = \{0\}$ and \textit{degenerate} otherwise. The second variation of $Y_{C}$ at the critical point $g \in C$ is positive if and only if $J_{g}$ is positive-definite. In this case, $g$ is nondegenerate and a strict local minimum for $Y_{C}$, and called \textit{stable}. Following \cite{BLP}, we say that $g$ is \textit{degenerate stable} if $J_{g}$ is positive-semidefinite with nontrivial kernel.

Let $(M, g)$ and $(B, j)$ be connected compact Riemannian manifolds of dimension $n$ and $p$ $(n>p)$ respectively. Let $\pi: (M, g) \to (B,j)$ be a Riemannian submersion with totally geodesic fibers. In Theorem \ref{MainThm}, we assume that $\mbox{Ric}^{M}$, the Ricci tensor of $(M,g)$, satisfies $\mbox{Ric}^{M} \geq \widetilde{c}g$ for some $\widetilde{c} > 0$. In this section, we always assume that $\mbox{Ric}^{M}= \widetilde{c}g$ for some $\widetilde{c} > 0$ and so that $g$ is Einstein. Let $A$ be the \textit{A-tensor} of the Riemannian submersion $\pi$. (For its definition, see \cite[p.239]{Besse} for example.) The assumption that $g$ is Einstein implies that the scalar curvature of each fiber is constant, that $|A|^{2}$ is constant on $M$ and that the scalar curvature $S^{B}$ of $(B,j)$ is constant (see \cite[p.250]{Besse}). If $A=0$, then $(M,g)$ is locally isometric to the Riemannian product $(B \times F, j \oplus \iota^{*}g)$. We assume $|A|>0$. As for the canonical variation, the O'Neill formula (\cite[p. 253]{Besse}) reads
\begin{equation}
\label{ONeill}
S(g_{t}) = -t^{2}|A|^{2} + S^{B} +t^{-2}S^{F},
\end{equation}
where $S^{F}$ is the scalar curvature of each fiber $(F, \iota^{*}g)$. This formula in particular implies that if $g$ is Einstein, then for any $t>0$, $S(g_{t})$ is constant on $M$ and so $g_{t}$ is a critical point of $Y_{[g]}$. Furthermore, if $|A|>0$, then $g_{t}$ is stable for sufficiently large $t$. Using Theorem \ref{MainThm}, we obtain the following quantitative estimate:
\begin{theo}
\label{stability}
Let $(M, g)$ and $(B, j)$ be connected compact Riemannian manifolds of dimension $n$ and $p$ $(n>p)$ respectively. Assume that $(M,g)$ is not isometric to the round sphere. Let $\pi: (M, g) \to (B,j)$ be a Riemannian submersion each of whose fibers is connected and totally geodesic. Assume $|A|>0$. Assume also that there exists $\widetilde{c} > 0$ such that 
\begin{equation*}
\mbox{Ric}^{M} = \widetilde{c}g,
\end{equation*}
where $\mbox{Ric}^{M}$ is the Ricci tensor of $(M,g)$. In case $p \leq n-2$, assume also that there exists $0 \leq c < \widetilde{c}$ such that 
\begin{equation*}
\mbox{Ric}^{F_{y}} = c(\iota^{*}g)
\end{equation*}
for any $y \in B$. Set
\begin{equation*}
\Gamma := \frac{n^{2}+1}{n+1}\left( \widetilde{c}-c \right) +pc.
\end{equation*}
Then for any $t \geq \max\{1, \: \sqrt{\Gamma}/|A|\}$, $g_{t}$ is stable.
\end{theo}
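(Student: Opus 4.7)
The plan is to reduce stability of $g_t$ to a single spectral inequality and then combine the eigenvalue estimate from Theorem \ref{MainThm} with the O'Neill scalar-curvature formula (\ref{ONeill}). Since $g_t$ is a critical point of the Yamabe functional $Y_{[g_t]}$, the Jacobi operator $J_{g_t} = \Delta_{g_t} - \frac{S(g_t)}{n-1}\mathrm{Id}$ is positive-definite on $L^2_0(M, g_t)$ if and only if every positive eigenvalue of $\Delta_{g_t}$ strictly exceeds $S(g_t)/(n-1)$. The smallest such eigenvalue is $\lambda_1(g_t)$, so stability of $g_t$ is equivalent to the strict inequality $\lambda_1(g_t) > S(g_t)/(n-1)$.

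First, I would use the Einstein hypotheses to make $S(g_t)$ explicit. From $\mbox{Ric}^M = \widetilde{c} g$ we get $S(g) = n\widetilde{c}$, and from $\mbox{Ric}^{F_y} = c(\iota^* g)$ we get $S^F = (n-p)c$. Evaluating (\ref{ONeill}) at $t=1$ solves for $S^B = n\widetilde{c} + |A|^2 - (n-p)c$, whereupon (\ref{ONeill}) yields
$$S(g_t) = -t^2|A|^2 + n\widetilde{c} + |A|^2 - (n-p)c + t^{-2}(n-p)c.$$
On the eigenvalue side, Theorem \ref{MainThm} applied to $(M,g)$ gives
$$\lambda_1(g_t) \;\geq\; L(t) \;:=\; \frac{\widetilde{c}-c}{n+1} + t^{-2}\left(\frac{n^2+1}{n^2-1}\widetilde{c} + \frac{c}{n+1}\right),$$
and this inequality is \emph{strict} for every $t \geq 1$, since the equality case in (\ref{main-estimate}) requires $t=1$ \emph{and} $(M,g)$ isometric to the round sphere, while we have assumed the contrary.

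The key computation is then to verify the clean algebraic identity
$$(n-1)L(t) - S(g_t) \;=\; |A|^2(t^2-1) - \Gamma\,(1 - t^{-2}) \;=\; \frac{(t^2-1)\bigl(|A|^2 t^2 - \Gamma\bigr)}{t^2}.$$
This reduces to checking that the $t^0$ and $t^{-2}$ coefficients of $(n-1)L(t) - S(g_t)$ equal $-\Gamma - |A|^2$ and $+\Gamma$ respectively; both amount to repackaging $\tfrac{(n-1)(\widetilde{c}-c)}{n+1} - n\widetilde{c} + (n-p)c$ and its $t^{-2}$-analogue using the definition of $\Gamma$. Once this identity is in hand, for any $t \geq \max\{1, \sqrt{\Gamma}/|A|\}$ both factors $t^2-1$ and $|A|^2 t^2 - \Gamma$ are nonnegative, giving $L(t) \geq S(g_t)/(n-1)$; combining this with the strict inequality $\lambda_1(g_t) > L(t)$ from the previous paragraph yields $\lambda_1(g_t) > S(g_t)/(n-1)$, which is stability.

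The main obstacle I anticipate is the algebraic cancellation above: \emph{a priori} the coefficients of $1$ and $t^{-2}$ in $(n-1)L(t) - S(g_t)$ are tangled functions of $\widetilde{c}$, $c$, $n$, $p$, and $|A|^2$, and it is not obvious that they both collapse to $\pm\Gamma$. This collapse is exactly what makes the threshold $\sqrt{\Gamma}/|A|$ natural; it is also the reason the non-sphere hypothesis is indispensable, since otherwise equality in $L(t) \geq S(g_t)/(n-1)$ at the two boundary values $t=1$ and $t=\sqrt{\Gamma}/|A|$ would only deliver degenerate stability rather than the strict positivity of $J_{g_t}$.
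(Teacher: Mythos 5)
Your proposal is correct and follows essentially the same route as the paper: reduce stability to the spectral inequality $\lambda_1(g_t) > S(g_t)/(n-1)$, compute $S(g_t)$ from the O'Neill formula and the Einstein hypotheses, invoke the lower bound from Theorem~\ref{MainThm}, and verify the algebraic identity that packages everything into the factored expression $|A|^2 t^{-2}(t^2-1)(t^2-\Gamma/|A|^2)$. Your explicit observation that the non-sphere hypothesis forces the strict inequality $\lambda_1(g_t) > L(t)$ for all $t\geq1$, thereby upgrading the boundary cases to genuine (not merely degenerate) stability, is the same mechanism the paper uses.
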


Before the proof, we remark that study of stability of the canonical variation of the round metric on a sphere is completed by Bettiol--Lauret--Piccione \cite{BLP} (see the list on page 61 of \cite{BLP}) .

\begin{proof}

By the assumption, $(\ref{ONeill})$ can be rewritten as

\begin{equation*}
S(g_{t}) = -t^{2}|A|^{2} +S^{B} + t^{-2}(n-p)c.
\end{equation*}
Hence in particular, one obtains
\begin{equation*}
n \widetilde{c} =S(g) = -|A|^{2}+ S^{B} + (n-p)c .
\end{equation*}
Hence we have 
\begin{equation}
\label{Sgt}
S(g_{t}) = -t^{2}|A|^{2}+ |A|^{2}-(n-p)c+n\widetilde{c}+ t^{-2}(n-p)c. 
\end{equation}
Combining $(\ref{Sgt})$ with Theorem \ref{MainThm}, one obtains
\begin{equation*}
\begin{split}
(n-1) \left( \lambda_{1}(g_{t}) - S(g_{t}) \right) &> |A|^{2}t^{2}-\left( \Gamma + |A|^{2} \right) + \Gamma t^{-2} \\
&= t^{-2} \left( |A|^{2}t^{4}-\left( \Gamma + |A|^{2} \right) t^{2} + \Gamma \right) \\
&= |A|^{2}t^{-2} \left( t^{2}-\frac{\Gamma}{|A|^{2}} \right) \left( t^{2}-1 \right)  \\
\end{split}
\end{equation*}
for any $t \geq 1$. Thus we conclude the assertion.

\end{proof}

\section{Examples}
Before seeing examples of application of Theorem \ref{MainThm}, we will see examples that do not satisfy the curvature assumption of Theorem \ref{MainThm}. 
\begin{exam}\label{torus} We consider the standard torus $T^{n} := \mathbf{R}^{n}/\mathbf{Z}^{n}$ $(n \geq 2)$ equipped with the standard flat metric $g = \langle \cdot, \cdot \rangle$. Let $(x^{1}, \ldots, x^{n})$ be the standard coordinate on $T^{n}$ and consider the standard projection 
\begin{equation*}
\pi: T^{n} \rightarrow T^{n-1}, \quad (x^{1}, \ldots, x^{n}) \mapsto (x^{1}, \ldots, x^{n-1}).
\end{equation*}
Clearly $\pi$ is a Riemannian submersion each of whose fiber is connected and totally geodesic. For each $1 \leq i \leq n$, set $\partial_{i} := \frac{\partial}{\partial x^{i}}$. Then one has 
\begin{equation*}
\Delta^{T^{n}} = - \sum_{i=1}^{n} \partial_{i}^{2}, \quad \Delta_{v} = -\partial_{n}^{2}, \quad \Delta_{h}= - \sum_{i=1}^{n-1} \partial_{i}^{2}.
\end{equation*}
It is known that 
\begin{equation*}
\mbox{Spec}(\Delta^{T^{n}}) = \{ 4\pi^{2}|y|^{2} \mid y \in \mathbf{Z}^{n} \}
\end{equation*} 
holds.  Moreover, for each $y, -y \in \mathbf{Z}^{n}$, the space of the corresponding eigenfunctions is $\mbox{span}\{ \varphi_{y}(x):= \mbox{cos}(2\pi \langle x, y \rangle ), \psi_{y}(x):= \mbox{sin}(2\pi \langle x, y \rangle ) \}$ (see \cite[pp.272-273]{Sakai}, for instance). Let $\{e_{i} \}_{i=1}^{n}$ be the standard orthonormal basis of $\mathbf{R}^{n}$. By $(\ref{variation-Laplacian})$, one can see that for $t >1$, $\lambda_{1}(g_{t}) = 4 \pi^{2}t^{-2}$ holds and the space of the corresponding eigenfunctions is $\mbox{span}\{ \varphi_{e_{n}}(x)= \mbox{cos}(2\pi x^{n}), \psi_{e_{n}}(x) = \mbox{sin}(2\pi x^{n}) \}$. Thus one concludes that
\begin{equation*}
\Lambda_{1}(T^{n},t) = \lambda_{1}(g_{t}) \mbox{Vol}(T^{n}, g_{t})^{2/n} = 4\pi^{2}t^{2(1-n)/n} \rightarrow 0 \quad (t \rightarrow \infty).
\end{equation*}
\end{exam}
The above example can be generalized to the following example:

\begin{exam}
Let $(B.j)$ and $(F,h)$ be connected compact Riemannian manifolds of dimension $p$ and $q$ respectively. We assume that there exists $c \geq 0$ such that $\mbox{Ric}^{F} = c h$. We consider the Riemannian product $(B \times F, g= j \oplus h)$ and the canonical submersion
\begin{equation*}
\pi: B \times F \to B, (x,y) \mapsto x.
\end{equation*}
Each fiber is $(F,h)$ and totally geodesic. Since the Ricci tensor on $(B \times F, j \oplus h)$ is given by
\begin{equation*}
\mbox{Ric}^{B \times F} = \left(\mbox{Ric}^{B} \oplus 0 \right) + \left( 0 \oplus \mbox{Ric}^{F} \right),
\end{equation*}
$\widetilde{c}$ satisfying $\mbox{Ric}^{B \times F} \geq \widetilde{c} (j \oplus h)$ is at most $c$. In case $q=1$, let $c=0$ in the above. Hence this submersion $\pi$ does not satisfy the curvature assumption of Theorem \ref{MainThm}. In fact, one can prove the following:
\begin{prop}
\label{product-prop}
In the above setting, one has 
\begin{equation*}
\lim_{t \to \infty} \Lambda_{1}(B \times F, t) = 0
\end{equation*} 
with respect to the canonical variation $(g_{t} = j \oplus t^{2}h)_{t>0}$.
\end{prop}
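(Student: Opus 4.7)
The plan is to exploit the product structure of $(B\times F, g_t)$, under which the Laplace--Beltrami operator splits as a sum. On a Riemannian product one has
$$
\Delta^{B\times F}_{g_t} \;=\; \Delta^{B}_{j} + \Delta^{F}_{t^{2}h} \;=\; \Delta^{B}_{j} + t^{-2}\Delta^{F}_{h},
$$
where the second equality uses the scaling rule $\Delta_{\lambda^{2}h} = \lambda^{-2}\Delta_{h}$ under the sign convention of this paper; this is also consistent with $(\ref{variation-Laplacian})$ specialized to the product submersion. Consequently
$$
\mbox{Spec}(\Delta^{B\times F}_{g_t}) \;=\; \{\,\alpha + t^{-2}\beta \,:\, \alpha\in\mbox{Spec}(\Delta^{B}_{j}),\ \beta\in\mbox{Spec}(\Delta^{F}_{h})\,\},
$$
and the only candidates for the first positive eigenvalue are $\beta_{1}(B)$ (from $\alpha=\beta_{1}(B),\ \beta=0$) and $t^{-2}\beta_{1}(F)$ (from $\alpha=0,\ \beta=\beta_{1}(F)$), where $\beta_{1}(B),\beta_{1}(F)$ denote the first positive eigenvalues of $\Delta^{B}_{j},\Delta^{F}_{h}$. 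Hence
$$
\lambda_{1}(g_{t}) \;=\; \min\{\beta_{1}(B),\ t^{-2}\beta_{1}(F)\}.
$$

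Next, for $t \geq \sqrt{\beta_{1}(F)/\beta_{1}(B)}$ this minimum equals $t^{-2}\beta_{1}(F)$. Combining with the Fubini-type identity $(\ref{Fubini})$, which yields $\mbox{Vol}(B\times F, g_{t}) = t^{q}\,\mbox{Vol}(B,j)\,\mbox{Vol}(F,h)$, and setting $n := p+q$, I would compute, for all sufficiently large $t$,
$$
\Lambda_{1}(B\times F, t) \;=\; t^{-2}\beta_{1}(F)\cdot\bigl(t^{q}\mbox{Vol}(B,j)\mbox{Vol}(F,h)\bigr)^{2/n} \;=\; C\, t^{-2p/n},
$$
where $C := \beta_{1}(F)\bigl(\mbox{Vol}(B,j)\mbox{Vol}(F,h)\bigr)^{2/n} > 0$. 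Since $p\geq 1$, the exponent $-2p/n$ is strictly negative, so $\Lambda_{1}(B\times F, t)\to 0$ as $t\to\infty$.

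There is no real obstacle; the only bookkeeping is the two-way case split identifying which of the candidate eigenvalues realizes $\lambda_{1}(g_{t})$, which resolves trivially for large $t$. This argument recovers Example \ref{torus} (where $p=n-1$, $q=1$) as a special case, and highlights that the decay of $\Lambda_{1}$ here is purely a product-rescaling effect—consistent with the fact that the Ricci curvature hypothesis of Theorem \ref{MainThm} necessarily fails for a Riemannian product, because the horizontal Ricci contribution along vertical vectors vanishes.
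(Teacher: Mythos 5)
Your argument is correct and follows essentially the same route as the paper: both diagonalize $\Delta^{B\times F}_{g_t}$ using the product decomposition and $(\ref{variation-Laplacian})$, identify $\lambda_1(g_t) = t^{-2}\lambda_1(F,h)$ for $t$ large (the paper via the explicit eigenfunction $1\times\psi$, you via the min of the two candidate low eigenvalues), and then apply the volume scaling $\mathrm{Vol}(M,g_t)=t^{n-p}\mathrm{Vol}(M,g)$. Your exponent $-2p/n$ is the correct one; the paper's displayed exponent $t^{2/(p+q)-2}$ (and the missing power $2/n$ on the volume factor) appears to be a typographical slip, since $2q/(p+q)-2=-2p/n$ is what the computation actually gives.
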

By this proposition, one cannot necessarily conclude $\lim_{t \to \infty} \Lambda_{1}(M,t) = \infty$ if the assumption $c < \widetilde{c}$ in Theorem \ref{MainThm} is weakened to $c \leq \widetilde{c}$. The proof of this proposition is generalization of the discussion in Example \ref{torus}. Before the proof, we recall basic facts about the spectrum of the Laplacian on a product of Riemannian manifolds. For functions $\varphi \in C^{\infty}(B)$ and $\psi \in C^{\infty}(F)$, we define the function $\varphi \times \psi$ on $B \times F$ by
\begin{equation*}
(\varphi \times \psi)(x, y) := \varphi (x)\psi (y), \quad (x,y) \in B \times F.
\end{equation*}
The set $\{ \varphi \times \psi \mid \varphi \in C^{\infty}(B), \psi \in C^{\infty}(F) \}$ is dense in $L^{2}(B \times F)$. We have $\Delta^{B\times F} (\varphi \times \psi) = (\Delta^{B}\varphi) \times \psi + \varphi + (\Delta^{F}\psi)$. Each eigenvalue of $\Delta^{B\times F}$ is of the form $\lambda_{k}(B,j) + \lambda_{l}(F,h)$ and its corresponding eigenfunction is of the form $\varphi\times \psi$ with $\Delta^{B} \varphi  = \lambda_{k}(B,j) \varphi$ and $\Delta^{F} \psi  = \lambda_{l}(F,h) \psi$. 

\begin{proof}[Proof of Proposition \ref{product-prop}]
Let $T>0$ be the number satisfying $T^{-2} \lambda_{1}(F,h) = \lambda_{1}(B,j)$. Let $\psi \in C^{\infty}(F)$ be a function satisfying $\Delta^{F} \psi = \lambda_{1}(F,h)$. Then one clearly has
\begin{equation*}
\Delta_{v} (1\times \psi) = 1\times( \Delta^{F}\psi) = \lambda_{1}(F,h) (1\times \psi)
\end{equation*}
and so $\Delta_{h} (1\times \psi) =0$. Hence, using $(\ref{variation-Laplacian})$, one can easily see that for $t>T$, $\lambda_{1}(B \times F, g_{t}) = \lambda_{1}(F,h) t^{-2}$ holds and the space of the corresponding eigenfunctions is $\mbox{span}\{ 1 \times \psi \mid  \psi \in C^{\infty}(F), \Delta^{F} \psi = \lambda_{1}(F,h)\psi \}$. Thus one concludes
\begin{equation*}
\Lambda_{1}(B \times F,t)  =  \lambda_{1}(F,h)\mbox{Vol}(B \times F, g) t^{2/(p+q)-2}  \to 0 \quad  (t \to \infty).
\end{equation*} 
\end{proof}

\end{exam}
It is well known that compact rank one symmetric spaces are exactly $S^{n}$, $\mathbf{R}P^{n}$, $\mathbf{C}P^{n}$, $\mathbf{H}P^{n}$ and the Cayley projective plane $\mathbf{Ca}P^{2}$. In what follows, we always assume that $\mathbf{C}P^{n}$, $\mathbf{H}P^{n}$ and $\mathbf{Ca}P^{2}$ are endowed with the canonical metrics such that the sectional curvatures on them lie in the iterval $[1,4]$. First we consider Riemannian submersions from round spheres, all of which satisfy the assumption of Theorem \ref{MainThm}. 
\begin{exam}
\label{exam-sphere}
We consider the following Riemannian submersions:
\begin{enumerate}[(1)]
\item $S^{1} \to S^{2n+1} \to \mathbf{C}P^{n}$,
\item $S^{3} \to S^{4n+3} \to \mathbf{H}P^{n}$,
\item $S^{7} \to S^{15} \to S^{8}(1/2)$.
\end{enumerate}

First we consider the submersion (1). Since the first eigenvalue of the round sphere $S^{m}$ is $m$ and that of $\mathbf{C}P^{n}$ is $4(n+1)$, Theorem \ref{BBB-theo} implies that the inequality
\begin{equation*}
2n+1 \leq \lambda_{1}(g_{t}) \leq 4(n+1)
\end{equation*}
holds for any $0<t\leq 1$. Moreover, this submersion satisfies the assumption of Theorem \ref{MainThm} and so Theorem \ref{MainThm} implies that the inequality
\begin{equation*}
\frac{n}{n+1} + \frac{2n^{2}+2n+1}{n+1} t^{-2} \leq \lambda_{1}(g_{t}) \leq 4(n+1)
\end{equation*}
holds for any $1 \leq t$. In fact, Tanno \cite{Tanno} obtained 
\begin{equation*}
\lambda_{1}(S^{2n+1}, g_{t}) = \min\{2n+t^{-2}, 4(n+1)\},
\end{equation*}
which satisfies the above two inequalities. This Tanno's result implies that $g_{t}$ is stable for any $t \neq 1$ and degenerate stable for $t=1$. The stability condition that can be obtained from Theorem \ref{stability} is not optimal in this case. (See Example \ref{Kobayashi-example}).

Similarly, we consider the submersion (2). Since the first eigenvalue of $\mathbf{H}P^{n}$ is $8(n+1)$, Theorem \ref{BBB-theo} implies that the inequality
\begin{equation*}
4n+3 \leq \lambda_{1}(g_{t}) \leq 8(n+1)
\end{equation*}
holds for any $0<t\leq 1$. Moreover, Theorem \ref{MainThm} implies that the inequality
\begin{equation*}
\frac{n}{n+1}+ \frac{4n^{2}+6n+3}{n+1}t^{-2} \leq \lambda_{1}(g_{t}) \leq 8(n+1)
\end{equation*}
holds for any $1 \leq t$. In fact, Tanno \cite{Tanno-Tsukuba} obtained 
\begin{equation}
\label{Tanno-4n+3}
\lambda_{1}(S^{4n+3}, g_{t}) = \min\{4n+3t^{-2}, 8(n+1)\},
\end{equation}
which satisfies the above two inequalities. By the O'Neill formula $(\ref{ONeill})$, one obtains $S(g_{t}) = -12nt^{2}+16n(n+2)+6t^{-2}$. Hence combining this with the Tanno's result $(\ref{Tanno-4n+3})$, one can conclude that $g_{t}$ is stable if and only if $6nt^{4}+8(n^{2}+n+1)t^{2}-3>0$ and $t\neq 1$.

With respect to (3), the first eigenvalue of $S^{8}(1/2)$ is $32$ and so Theorem \ref{BBB-theo} implies that the inequality
\begin{equation*}
15 \leq \lambda_{1}(g_{t}) \leq 32
\end{equation*}
holds for any $0<t\leq 1$. Moreover, this submersion satisfies the assumption of Theorem \ref{MainThm} and so Theorem \ref{MainThm} implies that the inequality
\begin{equation*}
\frac{1}{2}+ \frac{29}{2}t^{-2} \leq \lambda_{1}(g_{t}) \leq 32
\end{equation*}
holds for any $1\leq t$. In fact, Bettiol and Piccione \cite{BP} obtained 
\begin{equation*}
\lambda_{1}(S^{15}, g_{t} ) =  \min\{8+7t^{-2}, 32\},
\end{equation*}
 which satisfies the above two inequalities. In \cite{BLP}, it was shown that $g_{t}$ is stable if and only if $t>\sqrt{(\sqrt{19}-4)/2} \approx 0.4236$ and $t \neq 1$.

\begin{rema}
Escobales \cite{Escobales} and Ranjan \cite{Ranjan} completed the classification of Riemannian submersions from round spheres with totally geodesic fibers and showed that such submersions are exactly the above ones. In fact, by the subsequent work of Gromoll and Grove \cite{GG} and that of Wilking \cite{Wilking}, any Riemannian submersion with connected fibers from a round sphere is one of the above (see also \cite[Theorem 4.3.3]{GW}).
\end{rema}

\end{exam}

Before moving on to the next example, we review the following remarkable result due to Escobales \cite{Escobales-Tokyo}: 
\begin{theo}[\cite{Escobales-Tokyo}]
Let $(M,g)$ be one of $\mathbf{C}P^{m}$, $\mathbf{H}P^{m}$ and $\mathbf{Ca}P^{2}$. Assume that $\pi: (M,g) \to (B,j)$ is a Riemannian submersion with connected oriented fibers onto an oriented compact manifold $B$. Then $(M,g)$ is $\mathbf{C}P^{2n+1}$ and $\pi$ is nothing but the canonical fibration $\mathbf{C}P^{1} \to \mathbf{C}P^{2n+1} \to \mathbf{H}P^{n}$. In particular, each fiber of $\pi$ is totally geodesic. 
\end{theo}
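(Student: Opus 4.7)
The plan is to combine two types of rigidity: topological rigidity coming from the very restrictive cohomology rings of compact rank one symmetric spaces, and differential-geometric rigidity coming from the K\"{a}hler structure (in the $\mathbf{C}P^m$ case) together with O'Neill's formulas.

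First I would reduce to a purely smooth statement. Ehresmann's theorem implies that any Riemannian submersion between compact manifolds is a locally trivial fiber bundle; since each of the spaces in the list is simply connected, the long exact homotopy sequence of $F \hookrightarrow M \to B$ makes $B$ simply connected too, and the Serre spectral sequence converges with untwisted rational coefficients to $H^{*}(M; \mathbf{Q})$, which is a truncated polynomial algebra $\mathbf{Q}[x]/(x^{N+1})$ with $|x| \in \{2,4,8\}$.

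Next I would perform a case analysis on the degree of the generator. The delicate point, which goes back to classical work on fiberings of projective spaces (Adams, Borel, Browder), is that a truncated polynomial algebra on a generator of degree $4$ or $8$ cannot be built nontrivially from a fiber bundle whose fiber and base rational cohomologies are themselves truncated polynomial algebras, so the cases $M = \mathbf{H}P^m$ and $M = \mathbf{Ca}P^2$ admit no nontrivial fibration. For $M = \mathbf{C}P^m$, the only rational possibility is $F \simeq_{\mathbf{Q}} \mathbf{C}P^k$ with $B$ having rational cohomology ring $\mathbf{Q}[y]/(y^{(m+1)/(k+1)})$ where $|y| = 2(k+1)$, and the Euler characteristic identity forces $(k+1) \mid (m+1)$.

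Third, I would bring in the K\"{a}hler structure of $\mathbf{C}P^m$. Since $H^{2}(B;\mathbf{Q}) = 0$ whenever $k \geq 1$, the Serre sequence forces the K\"{a}hler class of $\mathbf{C}P^m$ to restrict nontrivially to each fiber, so each fiber is a complex submanifold of complex dimension $k$. A smooth family of disjoint complex submanifolds of $\mathbf{C}P^m$ of dimension $k$ foliating the whole space is highly constrained by projective geometry, and pushing this analysis together with the Riemannian submersion condition forces $k = 1$, hence $m = 2n+1$. Each fiber is then a projective line, which is automatically totally geodesic in the Fubini--Study metric (it is a holomorphic isometric image of $\mathbf{C}P^1$, or equivalently an orbit of a $PSU(2)$-subgroup of $PSU(2n+2)$), and the whole family coincides with the canonical twistor fibration $\mathbf{C}P^1 \to \mathbf{C}P^{2n+1} \to \mathbf{H}P^n$.

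The main obstacle is the second step: excluding nontrivial fibrations of $\mathbf{H}P^m$ and $\mathbf{Ca}P^2$, and reducing the $\mathbf{C}P^m$ case to $k = 1$ rather than, say, $F = \mathbf{C}P^2$ inside some $\mathbf{C}P^{3k+2}$. These topological rigidity results are classical but nontrivial; their proofs go through the multiplicative structure of the Serre spectral sequence and careful tracking of nonzero transgressions, and it is precisely the delicacy of this step that justifies quoting Escobales' theorem as a black box rather than re-deriving it.
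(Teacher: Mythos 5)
The paper itself quotes this theorem as a black box from Escobales \cite{Escobales-Tokyo} and gives no proof, so there is no internal argument to compare against; but your sketch has a genuine gap at its crucial step. You claim that because $H^{2}(B;\mathbf{Q})=0$, the Serre spectral sequence forces the K\"{a}hler class $[\omega]$ of $\mathbf{C}P^{m}$ to restrict nontrivially to each fiber $F$, ``so each fiber is a complex submanifold.'' That inference is false: nontrivial restriction of the \emph{cohomology class} gives $\int_{F}\omega^{k}\neq 0$, which by Wirtinger's inequality yields $\mbox{Vol}(F)\geq \frac{1}{k!}\left|\int_{F}\omega^{k}\right|$ with equality \emph{characterizing} the complex submanifolds. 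Nothing in the topological argument produces the equality case; a generic real $2k$-dimensional submanifold of $\mathbf{C}P^{m}$ representing a nonzero multiple of $[\mathbf{C}P^{k}]$ is not a complex submanifold. Without holomorphicity you cannot conclude the fibers are linear $\mathbf{C}P^{1}$'s, and the remaining deductions (totally geodesic, hence the canonical twistor fibration) collapse.

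This gap is symptomatic of a deeper mismatch of method. Your proposal uses the Fubini--Study metric essentially only to invoke Ehresmann's theorem and thereafter argues by topology and complex geometry, but the statement you are proving is a metric rigidity result: it classifies \emph{Riemannian} submersions and has as one of its conclusions that the fibers are totally geodesic, a conclusion that is false for general smooth fibrations. The analogous theorem for the round sphere already required Wilking's index-parity argument on path spaces \cite{Wilking}; Escobales' proof for projective spaces likewise leans on the curvature of the ambient metric and the structure theory of Riemannian foliations (O'Neill tensors, dual leaves, holonomy of the fibration) to first force the fibers to be totally geodesic, and only then identifies them among the known totally geodesic submanifolds of a CROSS. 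To repair your outline you would need either to import that metric machinery, or to genuinely establish the Wirtinger equality (i.e.\ that the fibers are calibrated), which cannot come from cohomology alone, and in addition to replace ``highly constrained by projective geometry'' with an actual argument ruling out a holomorphic $\mathbf{C}P^{k}$-fibration of $\mathbf{C}P^{m}$ for $k\geq 2$.
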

\begin{exam}
\label{exam-cp}
Motivated by the above theorem, we consider the fibration $\mathbf{C}P^{1} \to \mathbf{C}P^{2n+1} \to \mathbf{H}P^{n}$. Since the first eigenvalue of $\mathbf{C}P^{2n+1}$ is $8(n+1)$ and that of $\mathbf{H}P^{n}$ is also $8(n+1)$, Theorem \ref{BBB-theo} implies that the equation
\begin{equation}
\label{cp-1}
\lambda_{1}(g_{t}) =8(n+1)
\end{equation}
holds for any $0<t\leq 1$. Moreover, Theorem \ref{MainThm} implies that the inequality 
\begin{equation}
\label{cp-2}
\frac{4n}{4n+3}+\frac{64n^{3}+128n^{2}+100n+24}{16n^{2}+16n+3}t^{-2} < \lambda_{1}(g_{t}) \leq 8(n+1)
\end{equation}
holds for any $1 \leq t$. In fact, Bettiol, Lauret and Piccione \cite{BLP} recently proved 
\begin{equation*}
\lambda_{1}(\mathbf{C}P^{2n+1}, g_{t}) = \min\{8n+ 8t^{-2}, 8(n+1) \},
\end{equation*}
which satisfies $(\ref{cp-1})$ and $(\ref{cp-2})$. In \cite{BLP}, it was shown that $g_{t}$ is stable if and only if $\displaystyle t > \sqrt{\frac{\sqrt{(2n^{2}+n+1)^{2}+4n}-(2n^{2}+n+1)}{2n}}$.
\end{exam}

Next we consider an example of twistor fibrations:
\begin{exam} We consider the complex flag manifold $M= F(1,2)$. It is known that there exists a K\"{a}hler--Einstein metric $g$ on $M$ such that $\mbox{Ric}^{M}=2g$ and $M$ admits a Riemannian submersion to $\mathbf{C}P^{2}$ whose fibers are totally geodesic 2-spheres with constant sectional curvature $1$. (See \cite{DM} and \cite{FK} for details.) Since Theorem $\ref{MainThm}$ implies that the inequality
\begin{equation*}
\lambda_{1}(g_{t}) \geq \frac{2}{7} + \frac{79}{35}t^{-2}
\end{equation*}
holds for any $t \geq 1$. One has $|A|^{2}=2$ and $\Gamma = 65/7$ and so Theorem \ref{stability} implies that $g_{t}$ is stable for any $t \geq \sqrt{65/14} \approx 2.1547$. The author believes that this result is new.

\end{exam}

Next we consider fibrations that are generalization of some of the above examples.

\begin{exam}
\label{Kobayashi-example}
As generalization of the Hopf fibration $S^{1} \to S^{2n+1} \to \mathbf{C}P^{n}$, Kobayashi \cite{Kobayashi} showed that over a compact K\"{a}hler--Einstein manifold $(B,j)$ with $\mbox{Ric}^{B} = 2(n+1)j$, one can construct an $S^{1}$-fibration $S^{1} \to (M,g) \to (B,j)$ whose total space $(M,g)$ is a Sasaki--Einstein manifold with $\mbox{Ric}^{M} = 2ng$. (See also \cite[pp. 84-85]{Baum}.) Theorem $\ref{MainThm}$ implies that the inequality
\begin{equation*}
\frac{n}{n+1}+ \frac{2n^{2}+2n+1}{n+1}t^{-2} \leq \lambda_{1}(g_{t})  \leq \beta_{1}
\end{equation*}
holds for any $1 \leq t$. One has $|A|^{2}=2n$ and $\displaystyle \Gamma= \frac{2n(2n^{2}+2n+1)}{n+1}$. Hence Theorem \ref{stability} implies that $g_{t}$ is stable for any $\displaystyle t \geq \sqrt{2n+\frac{1}{n+1}}$. As far as the author knows, this result is new.
\end{exam}

\begin{rema} 
Wang and Ziller \cite{WZ} obtained an extension of the aforementioned Kobayashi's result. That is, they proved that under some assumptions, there exists a Riemannian submersion from a compact Einstein manifold of positive scalar curvature onto a product of K\"{a}hler--Einstein manifolds each of whose fibers is a totally geodesic flat torus. Some other extensions are also known. (See \cite{CS} and \cite{Sakane} for example.) Theorem \ref{MainThm} and Theorem \ref{stability} can be applied to these cases too, but we will omit the details.
\end{rema}

\begin{exam} We consider generalization of $S^{3} \to S^{4n+3} \to \mathbf{H}P^{n}$, which is Example \ref{exam-sphere} (2). Let $(B, j)$ be a quaternionic K\"{a}hler manifold of dimension $4n$ $(n\geq2)$. As is well known, $(B,j)$ is Einstein. It is also known that if the Ricci curvature of $(B,j)$ is positive, then $(B,j)$ must be compact. It follows from Konishi's work  \cite{Konishi} that for a quaternionic K\"{a}hler manifold $(B,j)$ of $\mbox{Ric}^{B} = 4n+8$, there exists a Riemannian submersion from a $3$-Sasakian manifold $(M,g)$ with $\mbox{Ric}^{M} = 4n+2$ onto $(B,j)$ whose fiber is totally geodesic and of constant sectional curvature $1$. Theorem \ref{BBB-theo} and Theorem \ref{MainThm} can be applied in this case as well. However, Nagy and Semmelmann \cite{NS} recently proved that if $g$ does not have constant sectional curvature, $\lambda_{1}(g_{t}) \geq 8(n+t^{-2})$ holds for any $t>0$. This estimate is better than the estimate that can be obtained from Theorem \ref{MainThm}. As is pointed out in \cite{NS}, one has $S(g_{t}) = -12nt^{2}+16n(n+2)+6t^{-2}$. Since one has
\begin{equation*}
(4n+2)\lambda_{1}(g_{t}) - S(g_{t}) = 12nt^{2}+16n(n-1)+(32n+10)t^{-2}>0,
\end{equation*}
$g_{t}$ is stable for any $t>0$.

\end{exam}

\begin{exam} The fibration $\mathbf{C}P^{1} \to \mathbf{C}P^{2n+1} \to \mathbf{H}P^{n}$ (Example $\ref{exam-cp}$) is a typical example of the twistor fibration of a quaternionic K\"{a}hler manifold. See \cite[Chapter 14]{Besse} for details. Let $(B, j)$ be a quaternionic K\"{a}hler manifold of dimension $4n$ $(n\geq2)$. Salamon \cite{Salamon} constructed a fibration $\pi : Z \to B$ such that $Z$ is a complex manifold and each fiber of $\pi$ is biholomorphic to $\mathbf{C}P^{1}$. This fibration is called the \textit{twistor fibration} and $Z$ the \textit{twistor space}. The following theorem is known:
\begin{theo}[See {\cite[14.80]{Besse}} and its proof as well as \cite{Salamon}]
Let $(B, j)$ be a quaternionic K\"{a}hler manifold of dimension $4n$ $(n\geq2)$ with $\mbox{Ric}^{B} = 4(n+2)j$. Then there exists a K\"{a}hler--Einstein metric $g$ on the twistor space $Z$ such that $\mbox{Ric}^{Z} = 4(n+1)g$, and $\pi : (Z,g) \to (B,j)$ is a Riemannian submersion each of whose fiber is isometric to $S^{2}(1/2)$, the round sphere of radius $1/2$, and is totally geodesic in $(Z,g)$.
\end{theo}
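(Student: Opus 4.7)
The plan is to construct $Z$ as the bundle of compatible almost complex structures on $B$ and then verify the Riemannian-submersion, totally geodesic fiber, K\"{a}hler, and Einstein properties one by one, following Salamon's original approach. Recall that on a quaternionic K\"{a}hler manifold $(B,j)$ of dimension $4n$, there is a parallel rank-$3$ subbundle $\mathcal{Q} \subset \mathrm{End}(TB)$ locally spanned by almost complex structures $I_{1}, I_{2}, I_{3}$ satisfying the quaternion relations, and each fiber $\mathcal{Q}_{b}$ inherits a natural Euclidean inner product. Setting
\begin{equation*}
Z := \{ J \in \mathcal{Q} : J^{2} = -\mathrm{Id} \},
\end{equation*}
one obtains a smooth $S^{2}$-bundle $\pi : Z \to B$. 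Because $\nabla^{B}$ preserves $\mathcal{Q}$, it induces a principal connection on the associated $SO(3)$-bundle, hence a horizontal distribution $H \subset TZ$ complementary to the vertical bundle $V = \ker d\pi$.

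Next I would define the metric $g$ on $Z$ by declaring $H$ and $V$ orthogonal, putting $g|_{H} = \pi^{*}j$, and putting on $V$ the restriction of the natural Euclidean metric on $\mathcal{Q}$ to the sphere bundle, rescaled so that each fiber is isometric to $S^{2}(1/2)$. By construction $\pi$ is a Riemannian submersion with the desired fiber geometry. For total geodesicity, I would observe that the connection on $Z$ descends from a principal connection on the $SO(3)$-bundle and that $\mathcal{Q}$ is parallel under $\nabla^{B}$; consequently the $T$-tensor (the horizontal component of $\nabla^{Z}_{V_{1}}V_{2}$ for vertical $V_{1},V_{2}$) vanishes identically, so the second fundamental form of every fiber is zero.

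The core step, due to Salamon, is producing an integrable almost complex structure $\mathcal{J}$ on $Z$: at $(b,J) \in Z$ let $\mathcal{J}$ act on $H_{(b,J)} \cong T_{b}B$ by $J$ itself, and on $V_{(b,J)} \cong T_{J}S^{2}$ by the standard complex structure of $\mathbf{C}P^{1}$. The Nijenhuis tensor of $\mathcal{J}$ decomposes into vertical, mixed, and horizontal parts; the vertical part vanishes since the fiber is $\mathbf{C}P^{1}$, and the mixed and horizontal parts vanish precisely because the curvature of $\nabla^{B}$ has values in $\mathfrak{sp}(n) \oplus \mathfrak{sp}(1)$, the quaternionic K\"{a}hler reduction being exactly the condition required. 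Parallelism of $\mathcal{Q}$ then yields $d\omega = 0$ for $\omega(\cdot,\cdot) = g(\mathcal{J}\cdot,\cdot)$, so $(Z,g,\mathcal{J})$ is K\"{a}hler.

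Finally, to obtain $\mathrm{Ric}^{Z} = 4(n+1)g$ I would apply the O'Neill formulas. With totally geodesic fibers, $\mathrm{Ric}^{Z}(X,Y) = \mathrm{Ric}^{B}(X_{*},Y_{*}) - 2\sum_{i}g(A_{X}e_{i}, A_{Y}e_{i})$ on horizontal vectors and $\mathrm{Ric}^{Z}(U,W) = \mathrm{Ric}^{F}(U,W) + \sum_{\alpha}g(A_{e_{\alpha}}U, A_{e_{\alpha}}W)$ on vertical vectors. Plugging in $\mathrm{Ric}^{B} = 4(n+2)j$ and the fiber Ricci $4(\iota^{*}g)$ (the Ricci constant of $S^{2}(1/2)$), the normalization of the fiber forces the pointwise A-tensor to take the values that produce $\mathrm{Ric}^{Z} = 4(n+1)g$ simultaneously on both pieces, and this can be cross-checked via the O'Neill scalar-curvature identity $(\ref{ONeill})$. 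The main obstacle throughout is the integrability of $\mathcal{J}$, which is the only step that uses the full force of the quaternionic K\"{a}hler condition; the remaining claims reduce to principal-bundle calculus combined with standard O'Neill bookkeeping.
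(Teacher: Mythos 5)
The paper does not actually prove this statement; it is explicitly presented as a citation to \cite[14.80]{Besse} and \cite{Salamon}, so there is no internal argument to compare against. Your sketch does follow the standard Salamon/Besse route, and the construction of $Z$, the principal-bundle description of the horizontal distribution, the Vilms-type argument for total geodesicity of the fibers, and the identification of integrability of $\mathcal{J}$ as the step that uses the full quaternionic K\"{a}hler curvature identities are all in order.

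There is, however, a genuine gap in the K\"{a}hler step. You write that ``parallelism of $\mathcal{Q}$ then yields $d\omega=0$,'' but parallelism of $\mathcal{Q}$ is \emph{not} sufficient for closedness of $\omega$. Parallelism is what produces the horizontal distribution and makes $\mathcal{J}$ well defined and compatible with the whole family of submersion metrics; it holds no matter how you scale the fiber. If your reasoning were correct, $(Z,g_{t},\mathcal{J})$ would be K\"{a}hler for every $t>0$, which is false: $\mathcal{J}$ stays integrable under the canonical variation, but $\omega_{t}(\cdot,\cdot)=g_{t}(\mathcal{J}\cdot,\cdot)$ is closed only for one value of the fiber radius. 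The actual content of the step is a computation of $\nabla^{Z}\mathcal{J}$ (or equivalently of $d\omega$) using the O'Neill tensors, which vanishes exactly when the fiber is $S^{2}(1/2)$ relative to the normalization $\mbox{Ric}^{B}=4(n+2)j$, and it uses the positivity of the scalar curvature of $B$ in an essential way (with negative scalar curvature the same ansatz does not produce a Riemannian K\"{a}hler metric). You should therefore replace ``parallelism yields $d\omega=0$'' with the explicit verification that the chosen fiber scale kills the non-K\"{a}hler terms. A secondary, more forgivable, gap is in the Einstein step: you invoke the O'Neill formulas but never pin down the $A$-tensor. Since $A$ is forced by the geometry and comes from the $\mathfrak{sp}(1)$-part of the curvature of $B$, one obtains $|A|^{2}=8n$; only after this is computed can one conclude $\mbox{Ric}^{Z}=4(n+1)g$ (the scalar-curvature identity $(\ref{ONeill})$ gives a consistency check but is not by itself a proof that both the horizontal and vertical Ricci blocks agree).
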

This Riemannian submersion $\pi : (Z,g) \to (B,j)$ satisfies the assumption of Theorem \ref{MainThm}. Hence Theorem \ref{MainThm} implies that the inequality 
\begin{equation*}
\lambda_{1}(g_{t}) > \frac{4n}{4n+3}+\frac{64n^{3}+128n^{2}+100n+24}{16n^{2}+16n+3}t^{-2} 
\end{equation*}
holds for any $t \geq 1$. One has $|A|^{2} = 8n$ and $\displaystyle \Gamma = \frac{4n(16n^{2}+32n+17)}{4n+3}$. Hence Theorem \ref{stability} implies that $g_{t}$ is stable for $t \geq \sqrt{2n+\frac{5}{2}+\frac{1}{4n+3} }$. The author believes that this result is new.
\end{exam}

\vspace{0.1in}
 
\textbf{Acknowledgements} I would like to express my gratitude to Professor Yusuke Sakane for telling me several examples of Riemannian submersions with totally geodesic fibers. I would like to thank Professor Shin Nayatani and Professor Tatsuya Tate for their constant encouragement and valuable comments. This work is supported by the Grant-in-Aid for JSPS Fellows Grant Number JP23KJ1074.

\end{document}